\newtheorem{theorem}{Theorem}[section]
\newtheorem{lemma}[theorem]{Lemma}
\newtheorem{proposition}[theorem]{Proposition}
\newtheorem*{nonumtheorem}{Theorem}
\newtheorem{corollary}[theorem]{Corollary}
\newtheorem*{claim}{Claim}
\theoremstyle{remark}
\newtheorem{remark}[theorem]{Remark}
\theoremstyle{definition}
\theoremstyle{example}
\numberwithin{equation}{section}
\newcommand{\Z}{\mathbb{Z}}
\newcommand{\R}{\mathbb{R}}
\newcommand{\C}{\mathbb{C}}
\newcommand{\e}{{\rm e}}
\newcommand{\CX}{\mathcal{X}}
\newcommand{\ee}{\varepsilon}
\newcommand{\1}{\mathbf{1}}
\newcommand{\CK}{\mathcal{K}}
\newcommand{\CQ}{\mathcal{Q}}
\begin{document}

\title{Gowers norms of multiplicative functions in progressions on average}

\author{Xuancheng Shao}
\address{Mathematical Institute\\ Radcliffe Observatory Quarter\\ Woodstock Road\\ Oxford OX2 6GG \\ United Kingdom}
\email{Xuancheng.Shao@maths.ox.ac.uk}
\thanks{XS was supported by a Glasstone Research Fellowship.}

\maketitle

\begin{abstract}
Let $\mu$ be the M\"{o}bius function and let $k \geq 1$. We prove that the Gowers $U^k$-norm of $\mu$ restricted to progressions $\{n \leq X: n\equiv a_q\pmod{q}\}$ is $o(1)$ on average over $q\leq X^{1/2-\sigma}$ for any $\sigma > 0$, where $a_q\pmod{q}$ is an arbitrary residue class with $(a_q,q) = 1$. This generalizes the Bombieri-Vinogradov inequality for $\mu$, which corresponds to the special case $k=1$.
\end{abstract}

\section{Introduction}

A basic problem in analytic number theory is to understand the distribution of primes, or other related arithmetic functions such as the M\"{o}bius function $\mu$ and the Liouville function $\lambda$, in arithmetic progressions when the modulus is relatively large. In this direction, the Bombieri-Vinogradov inequality leads us almost half way to the ultimate goal, if we average over the moduli.

\begin{nonumtheorem}[Bombieri-Vinogradov]
Let $X, Q \geq 2$, and let $A \geq 2$. Assume that $Q \leq X^{1/2}(\log X)^{-B}$ for some sufficiently large $B = B(A)$. Then for all but at most $Q(\log X)^{-A}$ moduli $q \leq Q$, we have
\[ \sup_{(a,q) = 1} \left| \sum_{\substack{n \leq X \\ n \equiv a\pmod{q}}} \Lambda(n) - \frac{1}{\varphi(q)} \sum_{n \leq X} \Lambda(n) \right| \ll_A \frac{X}{Q(\log X)^A}. \]
The same statement holds for the M\"{o}bius function $\mu$ and the Liouville function $\lambda$.
\end{nonumtheorem}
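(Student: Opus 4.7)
The plan is to reduce the statement, via orthogonality of Dirichlet characters, to estimating the character sum $\psi(X,\chi) = \sum_{n \leq X} \Lambda(n)\chi(n)$ on average over characters. A standard pigeonhole argument first converts the pointwise-for-most-$q$ statement into the equivalent averaged form
\[
\sum_{q \leq Q} \sup_{(a,q)=1} |E(X;q,a)| \ll_A \frac{X}{(\log X)^A},
\]
where $E(X;q,a)$ denotes the error term in the theorem. Expanding the congruence condition through Dirichlet characters and passing from each $\chi \bmod q$ to the primitive character $\chi^*$ inducing it, the problem reduces to bounding
\[
\sum_{r \leq Q} \frac{1}{\varphi(r)} \sum_{\chi \bmod r}^{*} |\psi(X,\chi)| \ll \frac{X}{(\log X)^{A+O(1)}},
\]
where the asterisk restricts to primitive characters modulo $r$.

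I would split the range of $r$ at $R = (\log X)^{C}$ for a sufficiently large constant $C = C(A)$. For $r \leq R$, the Siegel--Walfisz theorem gives $|\psi(X,\chi)| \ll X\exp(-c\sqrt{\log X})$ uniformly in the relevant $\chi$, which is amply sufficient after summation. For $R < r \leq Q$, I would apply Vaughan's identity (or Heath-Brown's identity in the case of $\mu$ and $\lambda$) to decompose $\Lambda$ into Type~I sums $\sum_{d \leq U} \alpha_d \sum_{n \leq X/d} \chi(dn)$ and Type~II sums $\sum_{U < m \leq V} \sum_{n} \beta_m \gamma_n \chi(mn)$, with parameters $U, V \approx X^{1/3}$ to be tuned. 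Type~I pieces are handled by the P\'olya--Vinogradov bound $|\sum_{n \leq Y} \chi(n)| \ll \sqrt{r}\log r$ combined with the hypothesis $r \leq X^{1/2}/(\log X)^B$. For Type~II pieces I would apply Cauchy--Schwarz in one variable and invoke the multiplicative large sieve
\[
\sum_{r \leq Q} \frac{r}{\varphi(r)} \sum_{\chi \bmod r}^{*} \Big|\sum_{n \leq N} a_n \chi(n)\Big|^2 \ll (Q^2 + N) \sum_{n} |a_n|^2
\]
applied to each of the two resulting bilinear factors.

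The principal obstacle is the Type~II estimate: obtaining square-root cancellation on average over $r$ in a bilinear character sum is precisely what the large sieve delivers, but combining it cleanly with the Vaughan decomposition and controlling log-losses throughout the parameter optimization requires care. This step also dictates the $X^{1/2}$ threshold, since the large-sieve contribution is favourable only when $Q^2 \leq X$; pushing $Q$ past $X^{1/2-\sigma}$ fails here and corresponds to the Elliott--Halberstam conjecture. The $\mu$ and $\lambda$ cases proceed by the same outline once Vaughan's identity is replaced by an analogous multiplicative decomposition (e.g., Heath-Brown's identity, or the hyperbola method applied to the convolution identity $\mu \ast 1 = \mathbf{1}_{n=1}$), producing Type~I and Type~II sums of essentially the same shape.
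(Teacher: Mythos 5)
The paper does not prove this statement: the Bombieri--Vinogradov theorem is invoked as a known black box, with a pointer to \cite[Chapter 17]{IK04} for the proof. So there is no in-paper argument for your outline to be compared against; the comparison is with the literature.

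That said, your sketch is the classical proof and matches the standard treatment in the cited reference: orthogonality of Dirichlet characters and the reduction to primitive characters, a split of the conductor range at $(\log X)^{C}$, Siegel--Walfisz for small conductors, and Vaughan's identity (or a Heath-Brown-type or $\mu*1=\delta$ decomposition for $\mu,\lambda$) feeding into the multiplicative large sieve for large conductors. The one place I would push back is the claim that the Type~I pieces are ``handled by the P\'olya--Vinogradov bound'' with $U,V\approx X^{1/3}$. Summing $\sqrt r\log r$ over all primitive $\chi\bmod r$, $r\le Q$, with weight $1/\varphi(r)$ costs roughly $Q^{3/2}$, so the Type~I contribution is of size $UVQ^{3/2}(\log X)^{O(1)}$; with $Q=X^{1/2-o(1)}$ and $UV\approx X^{2/3}$ this is far larger than $X$. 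You either need to take $U,V$ considerably smaller (which then narrows the Type~II window and forces the bilinear estimate to work on a wider dyadic range of shapes), or --- as is done in \cite{IK04} --- fold the Type~I contribution into a single linear sum $\sum_{n\le X}a_n\chi(n)$ with divisor-bounded coefficients and hit it with the large sieve as well, rather than using P\'olya--Vinogradov term by term. This is a point of detail in an otherwise correct outline, but it is exactly where a naive parameter choice would break, so it is worth flagging.
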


See~\cite[Chapter 17]{IK04} for its proof and applications. In this paper, we investigate a higher order generalization of the Bombieri-Vinogradov inequality, which measures more refined distributional properties. This higher order version involves Gowers norms, a central tool in additive combinatorics. We refer the readers to~\cite[Chapter 11]{TV06} for the basic definitions and applications. In particular, $\|f\|_{U^k(Y)}$ stands for the $U^k$-norm of the function $f$ on the interval $[0,Y]\cap \Z$.

For any arithmetic function $f: \Z \to \C$ and any residue class $a \pmod q$, denote by $f(q\cdot + a)$ the function $m \mapsto f(qm+a)$. Precisely we study the Gowers $U^k$-norm of $f$ restricted to progressions $\{n \leq X: n \equiv a \pmod{q}\}$, i.e. the $U^k$-norm of the functions $f(q\cdot + a)$ on $[0,X/q] \cap \Z$.

\begin{corollary}\label{cor:BV-nil}
Let $X, Q \geq 2$, let $k$ be a positive integer, let $A \geq 2$, and let $\ee > 0$. Assume that $Q \leq X^{1/2}(\log X)^{-B}$ for some sufficiently large $B = B(k,A,\ee)$. Then for all but at most $Q(\log X)^{-A}$ moduli $q \leq Q$, we have
\[ \sup_{\substack{0 \leq a < q \\ (a,q) = 1}} \|\mu(q \cdot + a)\|_{U^k(X/q)} \leq \ee. \]
The same statement holds for the Liouville function $\lambda$.
\end{corollary}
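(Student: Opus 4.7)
The plan is to deduce Corollary~\ref{cor:BV-nil} from a Bombieri--Vinogradov type estimate bounding the averaged correlations of $\mu$ against bounded-complexity nilsequences, which we anticipate to be the main theorem of the paper. The bridge between Gowers norms and nilsequences is the inverse theorem for $U^k$-norms of Green, Tao and Ziegler.

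Arguing by contradiction, suppose the conclusion fails. Then there is a set $\mathcal{Q} \subseteq [1,Q]$ of size at least $Q(\log X)^{-A}$ such that each $q \in \mathcal{Q}$ admits a residue $a_q$ coprime to $q$ with $\|\mu(q\cdot + a_q)\|_{U^k(X/q)} > \ee$. Applying the inverse $U^k$ theorem to each such $q$ yields a filtered nilmanifold $G_q/\Gamma_q$ of degree $\leq k-1$ and complexity bounded by some $M = M(\ee,k)$, a polynomial sequence $g_q \in \mathrm{poly}(\Z,G_q)$, and a Lipschitz function $F_q$ with $\|F_q\|_{\Lip}\leq 1$, such that
\[ \left| \sum_{m \leq X/q} \mu(qm + a_q)\,\overline{F_q}(g_q(m)\Gamma_q) \right| \geq \delta \cdot \frac{X}{q} \]
for some $\delta = \delta(\ee,k) > 0$. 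Only finitely many filtered nilmanifolds of complexity $\leq M$ exist up to isomorphism, so pigeonholing passes to a subset $\mathcal{Q}' \subseteq \mathcal{Q}$ of comparable size on which $G_q/\Gamma_q$ is a common nilmanifold $G/\Gamma$.

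One then invokes the main theorem of the paper, which should assert that for every $A' \geq 2$ and every fixed nilmanifold $G/\Gamma$ of complexity $\leq M$,
\[ \sum_{q \leq Q} \sup_{(a,q)=1}\,\sup_{F,g} \left| \sum_{m \leq X/q} \mu(qm+a)\,\overline{F}(g(m)\Gamma) \right| \ll_{A',M} \frac{X}{(\log X)^{A'}}, \]
the inner suprema being taken over $1$-Lipschitz $F$ on $G/\Gamma$ and polynomial sequences $g \in \mathrm{poly}(\Z,G)$. Summing the per-$q$ lower bound over $\mathcal{Q}'$ produces a lower bound of order $\delta X(\log X)^{-A}$, contradicting the displayed estimate once $A'$ (and hence $B$) is chosen sufficiently large in terms of $A$, $\delta$, and $M$.

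The main obstacle is proving this nilsequence Bombieri--Vinogradov estimate, which requires combining a Vaughan-type (or Heath--Brown) decomposition of $\mu$ with Type~I and Type~II sum estimates against nilsequences on $G/\Gamma$, handled uniformly in $q$. The technical heart is a Green--Tao factorisation theorem to split $g_q$ into smooth, rational, and minor-arc components, with equidistribution handling the minor-arc contribution and a Siegel--Walfisz-type orthogonality argument handling the major arcs. For the corollary itself, the only additional subtlety beyond this reduction is matching quantitative parameters, which dictates the choice of $B$ in terms of the inverse theorem's dependencies.
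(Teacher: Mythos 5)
Your deduction is essentially the paper's: Corollary~\ref{cor:BV-nil} is obtained by applying the Green--Tao--Ziegler inverse theorem to each modulus with large $U^k$-norm and then invoking the nilsequence Bombieri--Vinogradov statement, Theorem~\ref{thm:BV-nil}. The one caveat is that the paper's Theorem~\ref{thm:BV-nil} is formulated with the nilsequence data fixed per modulus and an exceptional-set conclusion saving only $\log\log X/\log(X/Q^2)$ --- not the summed, sup-inside estimate with a power-of-$\log$ saving that you anticipate (which the method, based on multiplicativity rather than a Vaughan-type decomposition, cannot deliver) --- but your contradiction follows at once from that weaker form, since for $B$ large every modulus flagged by the inverse theorem must lie in the exceptional set.
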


The Bombieri-Vinogradov inequality is the $k=1$ case of Corollary~\ref{cor:BV-nil} (qualitatively), since the $U^1$-norm of a function is the same as the absolute value of its average. By the inverse theorem for Gowers norms~\cite{GTZ}, Corollary~\ref{cor:BV-nil} is a straightforward consequence of the following result.

\begin{theorem}\label{thm:BV-nil}
Let $X, Q \geq 2$ be parameters with $10Q^2 \leq X$. Associated to each $Q \leq q < 2Q$ we have:
\begin{enumerate}
\item a residue class $a_q \pmod q$ with $0 \leq a_q < q$, $(a_q,q) = 1$;
\item a nilmanifold $G_q/\Gamma_q$ of dimension at most some $d \geq 1$, equipped with a filtration $(G_q)_{\bullet}$ of degree at most some $s \geq 1$ and a $(\log X)$-rational Mal'cev basis $\CX_q$;
\item a polynomial sequence $g_q: \Z \to G_q$ adapted to $(G_q)_{\bullet}$;
\item a Lipschitz function $\varphi_q: G_q/\Gamma_q \to \C$ with $\|\varphi_q\|_{\text{Lip}(\CX_q)} \leq 1$.
\end{enumerate}
Let $\psi_q: \Z \to \C$ be the function defined by $\psi_q(n) = \varphi_q(g_q(n)\Gamma_q)$. Then for any $A \geq 2$, the bound
\begin{equation}\label{eq:q-cancel} 
\left| \sum_{\substack{n \leq X \\ n \equiv a_q\pmod{q}}} \mu(n) \psi_q((n-a_q)/q) \right| \ll_{A,d,s} \frac{X}{Q} \cdot \frac{\log\log X}{\log (X/Q^2)} 
\end{equation}
holds for all but at most $Q(\log X)^{-A}$ moduli $Q \leq q < 2Q$. The same statement holds for the Liouville function $\lambda$.
\end{theorem}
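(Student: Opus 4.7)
\emph{Overall approach.} The plan is to prove Theorem~\ref{thm:BV-nil} by a Vaughan-type decomposition of $\mu$, reducing the resulting Type~I and Type~II sums to equidistribution questions for nilsequences on arithmetic progressions. After the substitution $n = qm + a_q$, the target sum is $S_q := \sum_{m \leq (X-a_q)/q} \mu(qm + a_q)\, \psi_q(m)$. Vaughan's identity expresses $\mu(n)$ as a combination of $O(\log X)$ Dirichlet convolutions $\sum_{de = n} \alpha_d \beta_e$ of controlled sizes, each falling either in a Type~I regime (where $\alpha_d$ is supported on a short initial segment) or a Type~II regime (where both $d$ and $e$ vary over middle dyadic ranges with $de \asymp X$). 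The AP constraint $n \equiv a_q \pmod q$ translates into $de \equiv a_q \pmod q$.

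\emph{Handling the two types on average.} For a Type~I piece, fixing $d$ and summing the inner nilsequence $\psi_q$ over $e$ with $de \equiv a_q \pmod q$ reduces to evaluating $g_q$ along an arithmetic progression in~$\Z$ of common difference $q/(q,d)$ and length $\approx X/(qd)$. The quantitative Leibman equidistribution theorem applied to this restricted sequence yields power-type cancellation unless $g_q$ factors through a proper subnilmanifold of $G_q/\Gamma_q$, which forces a nontrivial horizontal character on $G_q$ to satisfy a Diophantine relation modulo~$q$. A union bound over the bounded number of such characters shows this happens for at most $\ll Q(\log X)^{-A}$ moduli. For a Type~II piece, I apply Cauchy--Schwarz in $e$ to eliminate the opaque coefficient $\beta_e$; expanding the square produces inner sums $\sum_e \psi_q((de - a_q)/q)\, \overline{\psi_q((d'e - a_q)/q)}$ with $e$ ranging over residue classes determined by $d, d'$ modulo $q$. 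These are themselves nilsequences on the product nilmanifold $(G_q \times G_q)/(\Gamma_q \times \Gamma_q)$, driven by $e \mapsto (g_q((de-a_q)/q),\, g_q((d'e-a_q)/q))$. A second application of Leibman's theorem again furnishes cancellation outside a degenerate set of triples $(q, d, d')$, which can be counted and shown to contribute only to a small exceptional set of moduli.

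\emph{Assembly and main obstacle.} Summing the Type~I and Type~II contributions across the $O(\log X)$ Vaughan summands and the relevant dyadic ranges, then taking a union of the exceptional sets in~$q$, delivers the stated bound after optimizing the Vaughan cutoff parameter. The factor $(\log \log X)/\log(X/Q^2)$ arises from balancing the Type~II range $DE \asymp X$ (restricted to $D, E \gg Q$ to enjoy the AP cancellation, so that the free variable has length $\gg X/Q^2$) against the logarithmic losses from the Vaughan decomposition. The principal obstacle is that the data $(G_q, \Gamma_q, g_q, \varphi_q)$ depend on~$q$ in an essentially arbitrary manner: classical Green--Tao--Ziegler type arguments treat a single fixed nilsequence, whereas here all equidistribution, factorization, and exceptional-set estimates must be made uniform across the family $\{(G_q, \Gamma_q, g_q, \varphi_q)\}_q$, relying only on the shared complexity parameters $(d,s)$ and the common $(\log X)$-rationality of the Mal'cev bases. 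Controlling the ``bad'' $q$'s (those for which $\psi_q$ carries extra algebraic structure at the scales relevant to Type~I or Type~II) by way of horizontal-character counting modulo~$q$ is the heart of the argument.
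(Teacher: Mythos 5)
Your proposal takes a genuinely different route from the paper, and it runs into an obstruction the paper is specifically engineered to avoid. Vaughan's identity necessarily produces Type~II sums in which both variables $d,e$ sit near $\sqrt X$. After Cauchy--Schwarz, the inner sum over $e$ is subject to the simultaneous congruences $de\equiv a_q\pmod q$ and $d'e\equiv a_{q'}\pmod{q'}$, confining $e$ to a single residue class modulo $[q,q']$; averaging over $q,q'\asymp Q$, this modulus is typically $\asymp Q^2$. To reach $Q$ up to $X^{1/2-\sigma}$ for every $\sigma>0$ forces $Q^2$ to be nearly $X$, which overwhelms the available length $|e|\ll\sqrt X$: the progression contains at most one point, and no equidistribution estimate can give cancellation. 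Applying Cauchy--Schwarz in $d$ instead, or switching to a Heath-Brown or Linnik identity, does not remove this balanced middle range. This is precisely what the author flags in the introduction as the reason the argument does not extend to the von Mangoldt function.

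The paper's route instead exploits the multiplicativity of $\mu$ directly, via the K\'atai/Montgomery--Vaughan orthogonality criterion in Ramar\'e's sharp form. The chain of reductions is: (i) pass from $\mu$ to the completely multiplicative $\lambda$ by writing $\mu=\lambda*g$ (Lemma~\ref{lem:reduction1}); (ii) invoke the Green--Tao factorization theorem to reduce arbitrary nilsequences to totally equidistributed ones, at the cost of slightly changing $q$ and passing to a subinterval (Lemma~\ref{lem:reduction2}); (iii) deploy the Ramar\'e identity with the weight $w(n)=1/(\#\{Y\leq p<Z: p\mid n\}+1)$, which yields bilinear sums $\sum_{p,p'\in[P,2P)}\sum_m F(pm)\overline{F(p'm)}$ in which the prime variable is confined to $[Y,Z)$ with $Z=(X/Q^2)^{1/20}$, a small power of $X/Q^2$. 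The free variable $m$ then ranges up to $X/P\gg X/Z$, comfortably larger than $Q^2$, and Lemma~\ref{lem:typeII} together with the combinatorial input of Lemma~\ref{lem:typical-q} on typical values of $[q,q']$ closes the estimate. The factor $\log\log X/\log(X/Q^2)$ comes from the sieve bound of Lemma~\ref{lem:E-sieve} for integers with no prime factor in $[Y,Z)$ --- the intrinsic cost of working with so narrow a prime range --- not from balancing Vaughan cutoffs. Your outline also omits both the $\mu\to\lambda$ reduction and the reduction to equidistributed nilsequences, each of which is load-bearing.
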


See~\cite{GT-nilsequence} for the precise definitions of nilmanifolds and the associated data appearing in the statement. To avoid confusions later on, we point out that the Lipschitz norm is defined by
\[ \|\varphi_q\|_{\text{Lip}(\CX_q)} = \|\varphi_q\|_{\infty} + \sup_{x \neq y} \frac{|\varphi_q(x) - \varphi_q(y)|}{d(x,y)}, \]
where $d(\cdot,\cdot)$ is the metric induced by $\CX_q$. In particular $\|\varphi_q\|_{\infty} \leq \|\varphi_q\|_{\text{Lip}(\CX_q)}$. 

To understand this paper, however, it is not essential to know these definitions, as long as one is willing to accept certain results about nilsequences as black boxes, many of which can be found in~\cite{GT-nilsequence}. The readers are thus encouraged to consider the following special case when the nilmanifolds are the torus $\R/\Z$, the polynomial sequences are genuine polynomials of degree at most $s$, and the Lipschitz functions are $\varphi(x) = \e(x) = e^{2\pi i x}$.

\begin{nonumtheorem}[Main theorem, special case]
Let $X, Q \geq 2$ be parameters with $10Q^2 \leq X$, and let $s \geq 1$. Then for any $A \geq 2$, the bound
\[
\sup_{\substack{0 \leq a < q \\ (a,q) = 1}} \sup_{\alpha_1,\cdots,\alpha_s \in \R} \left| \sum_{\substack{n \leq X \\ n \equiv a\pmod{q}}} \mu(n) \e(\alpha_sn^s + \cdots + \alpha_1n) \right| \ll_{A,s} \frac{X}{Q} \cdot \frac{\log\log X}{\log (X/Q^2)} 
\]
holds for all but at most $Q(\log X)^{-A}$ moduli $Q \leq q < 2Q$. The same statement holds for the Liouville function $\lambda$.
\end{nonumtheorem}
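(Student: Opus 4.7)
My plan is to emulate the Vaughan--Heath-Brown proof of the Bombieri--Vinogradov theorem for $\mu$, with polynomial Weyl sums playing the role of Dirichlet characters, inducting on the degree $s$. The base case $s=0$ is Bombieri--Vinogradov itself. For the inductive step, I first apply Heath-Brown's identity at a level $k$ (to be chosen at the end, roughly $k\asymp\log(X/Q^2)/\log\log X$) to decompose $\mu(n)\1_{n\leq X}$ into $O_k(1)$ multilinear convolutions, and then a standard Type~I/II grouping reduces the sum $S_q$ to $O_s(1)$ pieces of two shapes: \emph{Type~I} sums $\sum_{d\leq X^{1/k}}\alpha_d\sum_{n:\, dn\equiv a_q\pmod q}\e(P_q(dn))$ with $|\alpha_d|\leq\tau_k(d)$, and \emph{Type~II} sums $\sum_{m\asymp M,\, n\asymp N}\alpha_m\beta_n\e(P_q(mn))\1[mn\equiv a_q\pmod q]$ with $MN\asymp X$ and $X^{1/k}\leq M,N\leq X^{1-1/k}$.

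For the Type~I pieces, fix $d$ and parametrize $n$ along the progression $n\equiv d^{-1}a_q\pmod{q/(d,q)}$; the inner sum becomes a polynomial Weyl sum $\sum_\ell\e(R_{d,q}(\ell))$ of degree $s$ and length $L\asymp X/(dq)$. Weyl's inequality yields cancellation of shape $L^{-c_s}$ with $c_s=2^{1-s}$, unless the leading coefficient of $R_{d,q}$ -- a definite rational multiple of $\alpha_s^{(q)}d^s$ -- lies near a rational with denominator bounded by $L^{O(c_s)}$. In the minor-arc regime we sum acceptably over $d$; in the major-arc regime $\alpha_s^{(q)}$ is close to a rational with small denominator, and a standard factoring argument (writing $\alpha_s^{(q)}=a/b+\beta$ and splitting the phase into a $b$-periodic part and a degree-$s$ polynomial with tiny leading coefficient) reduces the contribution to a lower-degree polynomial phase handled by the inductive hypothesis, with the averaging over $q$ cutting out the rare moduli for which such major-arc coupling is possible.

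For the Type~II pieces -- the heart of the argument -- I apply Cauchy--Schwarz in $m$ and expand the square to reduce to bounding
\[ \sum_{\substack{n_1,n_2\asymp N\\ n_1\neq n_2}}\beta_{n_1}\overline{\beta_{n_2}}\sum_{\substack{m\asymp M\\ mn_1\equiv a_q\pmod q\\ m(n_1-n_2)\equiv 0\pmod q}}\e\bigl(P_q(mn_1)-P_q(mn_2)\bigr) \]
plus a diagonal contribution of acceptable size $\ll X^2/(MQ)(\log X)^{O(1)}$. For off-diagonal pairs the congruences restrict $m$ to an AP of length $M(q,n_1-n_2)/q$, and the phase is a polynomial of degree $s$ in $m$ with leading coefficient $\alpha_s^{(q)}(n_1^s-n_2^s)$. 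Weyl's inequality gives either cancellation or a rational approximation to $\alpha_s^{(q)}(n_1^s-n_2^s)$; by a Vinogradov-type counting lemma, the latter scenario holding for many pairs $(n_1,n_2)$ forces $\alpha_s^{(q)}$ itself into a major arc, returning us to the inductive case as in the Type~I analysis.

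Optimizing $k$ against the Weyl exponent $c_s$ produces the stated factor $\log\log X/\log(X/Q^2)$, and Markov's inequality converts the resulting first-moment bound $\sum_q|S_q|\ll X(\log X)^{-A-1}$ into the exceptional-set statement. \textbf{The main obstacle} is the off-diagonal Type~II estimate: Cauchy--Schwarz introduces a congruence condition on $m$ that for small $(q,n_1-n_2)$ provides essentially no saving, and this loss must be balanced against the rapidly degrading Weyl exponent $c_s=2^{1-s}$. The tension between these two effects is what forces the threshold $Q\leq X^{1/2}$ and determines the precise shape of the bound.
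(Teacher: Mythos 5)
Your proposal is a genuinely different strategy from the paper's, but it runs into an obstruction that the paper explicitly identifies as the reason the result is \emph{not} accessible for $\Lambda$. You propose to decompose $\mu$ via Heath-Brown's identity into Type~I and Type~II convolutions, exactly as one does for $\Lambda$. But the paper's introduction states that ``one can make do with type-II estimates (or bilinear estimates) in a very restricted range when dealing with bounded multiplicative functions. This is the reason that we are unable to prove Theorem~1.2 for the primes, which would require type-II estimates in an inaccessible range.'' Your route requires bilinear estimates with both variables in $[X^{1/k}, X^{1-1/k}]$ --- a wide window --- whereas the paper's approach, via the K\'atai/Montgomery--Vaughan orthogonality criterion as implemented with Ramar\'e's weight, needs only bilinear estimates where one variable is a prime $p$ in a \emph{very} short range $[Y,Z)$ with $Y=\eta^{-1}$ and $Z=(X/Q^2)^{1/20}$, and the other variable is $m\leq X/p$ (so the two factors are wildly unbalanced). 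This restriction is the crux: it is a genuine use of the multiplicativity of $\mu$ (namely $\mu(pm)=\mu(p)\mu(m)$ for $p\nmid m$), not just its combinatorial Dirichlet-series structure, which is all that Heath-Brown's identity sees. The available Type~II technology (your Cauchy--Schwarz plus Weyl, losing the exponent $c_s=2^{1-s}$) cannot cover the full Heath-Brown range when $Q$ is close to $X^{1/2}$: after Cauchy--Schwarz the coprimality $(m,q)=1$ forces $n_1\equiv n_2\pmod q$ rather than the restriction on $m$ you describe, the off-diagonal pairs are thinned to $\sim N^2/q$ but the inner $m$-sum has length only $M/q$, and balancing these against a Weyl saving of $(M/q)^{-2^{1-s}}$ leaves an admissible window in the exponent of width $O(2^{-s})$ --- far too narrow to absorb the dyadic pieces Heath-Brown produces.

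There is a second, independent gap. You claim the factor $\log\log X/\log(X/Q^2)$ emerges from ``optimizing $k$ against the Weyl exponent $c_s$.'' In the paper this factor has nothing to do with Weyl: it is the sieve estimate (Lemma~\ref{lem:E-sieve}) for the density of integers with no prime factor in $[Y,Z)$, namely $\prod_{Y\leq p<Z}(1-1/p)\asymp \log Y/\log Z$. Weyl/equidistribution enters only to supply a \emph{power} saving $\eta^{c}$ in the Type~II term (Lemma~\ref{lem:typeII}), and that term is made negligible by taking $\eta$ a large negative power of $\log X$; the logarithmic main term comes entirely from the sieve. Your proposed optimization does not reproduce this shape, and it is unclear what bound it would actually give. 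Finally, the inductive descent on degree $s$ through major arcs is not addressed in the averaged-over-$q$ setting: for adversarially chosen $\alpha_s^{(q)}$, the denominator $b=b(q)$ of the major-arc approximation varies with $q$ and must be absorbed into the progression $n\equiv a_q\pmod q$, and you give no mechanism for controlling this uniformly while preserving the exceptional-set count $Q(\log X)^{-A}$. The paper instead handles all ``non-equidistributed'' behavior in one step via the Green--Tao factorization theorem (Lemma~\ref{lem:reduction2}), which cleanly enlarges $q$ to $q'$ with $q\mid q'$, $q'\leq qM$, and reduces to the totally equidistributed case.
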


Without restricting to arithmetic progressions (i.e. when $Q = O(1)$), the discorrelation between the M\"{o}bius function and nilsequences was studied by Green and Tao~\cite{GT-mobius-nil}, as part of their program to count the number of solutions to linear equations in prime variables.

The rest of the paper is organized as follows. In Section~\ref{sec:reduction} we reduce Theorem~\ref{thm:BV-nil} to the minor arc case (Proposition~\ref{prop:BV-equidist}). This reduction process is summarized in Lemma~\ref{lem:reduction2}, using a factorization theorem for nilsequences~\cite[Theorem 1.19]{GT-nilsequence}. In fact, one can obtain analogues of Theorem~\ref{thm:BV-nil} for all $1$-bounded multiplicative functions satisfying the Bombieri-Vinogradov estimate, such as indicator functions of smooth numbers (see~\cite{FT96,Har12} and the references therein). See~\cite{FH} for a previous work on Gowers norms of multiplicative functions, and also~\cite{Mat14} for a generalization to some not necessarily bounded multiplicative functions. However, we will not seek for such generality here since any such result can be easily deduced from Lemma~\ref{lem:reduction2} and Proposition~\ref{prop:BV-equidist} as needed.

The rest of the argument applies to all bounded multiplicative functions. In Section~\ref{sec:BV-equidist} we consider the minor arc case using an orthogonality criterion. The idea, going back to Montgomery-Vaughan~\cite{MV77} and K\'{a}tai~\cite{Kat86} (see also~\cite{BSZ13,Harper}), is that one can make do with type-II estimates (or bilinear estimates) in a very restricted range when dealing with bounded multiplicative functions. This is the reason that we are unable to prove Theorem~\ref{thm:BV-nil} for the primes, which would require type-II estimates in an inaccessible range, and also the reason that one saves no more than $\log X$ in the bound~\eqref{eq:q-cancel}. In fact, to get this saving we use a quantitatively superior argument of Ramar{\'e}~\cite{Ram09}, which received a lot of attention recently~\cite{MRT15,Gre16} following its use in Matom\"{a}ki and Radziwi{\l\l}'s recent breakthrough~\cite{MR16}. Finally the required type-II estimates will be proved in Section~\ref{sec:type-II}.

\section{Technical reductions}\label{sec:reduction}

In this section, we reduce Theorem~\ref{thm:BV-nil} to the following minor arc, or equidistributed, case. See~\cite[Definition 1.2]{GT-nilsequence} for the precise definition about equidistribution of nilsequences.

\begin{proposition}\label{prop:BV-equidist}
Let $X, Q \geq 2$ be parameters with $10Q^2 \leq X$. Let $\eta \in (0,1/2)$. Let $\CQ \subset [Q, 2Q)$ be an arbitrary subset. Associated to each $q \in \CQ$ we have:
\begin{enumerate}
\item a residue class $a_q \pmod q$ with $0 \leq a_q < q$, $(a_q,q) = 1$, and an arbitrary interval $I_q \subset [0,X]$;
\item a nilmanifold $G_q/\Gamma_q$ of dimension at most some $d \geq 1$, equipped with a filtration $(G_q)_{\bullet}$ of degree at most some $s \geq 1$ and an $\eta^{-c}$-rational Mal'cev basis $\CX_q$ for some sufficiently small $c = c(d,s) > 0$;
\item a polynomial sequence $g_q: \Z \to G_q$ adapted to $(G_q)_{\bullet}$ such that $\{g_q(m)\}_{1 \leq m \leq X/q}$ is totally $\eta$-equidistributed;
\item a Lipschitz function $\varphi_q: G_q/\Gamma_q \to \C$ with $\|\varphi_q\|_{\infty} \leq 1$ and $\int \varphi_q = 0$.
\end{enumerate}
Let $\psi_q: \Z \to \C$ be the function defined by $\psi_q(n) = \varphi_q(g_q(n)\Gamma_q)$. Let $f: \Z \to \C$ be a multiplicative function with $|f(n)| \leq 1$. Then
\begin{align*} 
&  \sum_{q \in \CQ} \left| \sum_{\substack{n \in I_q \\ n\equiv a_q\pmod{q}}} f(n) \psi_q( (n-a_q)/q ) \right| \\
\ll & \frac{\log \eta^{-1}}{\log (X/Q^2)} \sum_{q \in \CQ} \left(\frac{|I_q|}{q} + 1\right) + \eta^c X \log X \max_{q \in \CQ} \|\varphi_q\|_{\text{Lip}(\CX_q)}, 
\end{align*}
for some constant $c = c(d,s) > 0$.
\end{proposition}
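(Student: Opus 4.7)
The plan is to reduce the proposition to a bilinear (type-II) estimate for shifted nilsequences by applying the K\'atai--Bourgain--Sarnak--Ziegler orthogonality criterion for bounded multiplicative functions in the quantitatively sharper form due to Ramar\'e~\cite{Ram09}; the type-II estimate itself is the main content of Section~\ref{sec:type-II}. As a first step I linearize by choosing unimodular $c_q \in \C$ with $c_q S_q = |S_q|$ and setting
\[
G(n) \assign \sum_{q \in \CQ} c_q\, \psi_q\bigl(\tfrac{n-a_q}{q}\bigr)\, \1_{n \in I_q}\, \1_{n \equiv a_q\,(q)},
\]
so that the quantity to be bounded becomes $\sum_n f(n) G(n)$, with trivial pointwise bound $|G(n)| \leq \tau_{\CQ}(n) \assign \#\{q \in \CQ : n \in I_q,\, n \equiv a_q\,(q)\}$.

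I now apply Ramar\'e's identity. Let $P$ be the primes in $(P_1, P_2]$ for parameters $P_1, P_2$ to be chosen, and write $L \assign \sum_{p \in P} 1/p$ and $\alpha_P(n) \assign \#\{p \in P : p \| n\}$. Multiplicativity of $f$ at $p \| n$ gives $\alpha_P(n) f(n) = \sum_{p \in P,\, p \| n} f(p) f(n/p)$, and combining this with the Tur\'an--Kubilius inequality $\sum_{n \leq X}(\alpha_P(n) - L)^2 \ll XL$ yields
\[
L \sum_n f(n) G(n) = \sum_{p \in P} f(p) \sum_{\substack{m \leq X/p \\ (m,p) = 1}} f(m)\, G(pm) + O\bigl((XL)^{1/2}\, \|G\|_2\bigr).
\]
The error term is handled via $\|G\|_2^2 \leq \sum_n \tau_{\CQ}(n)^2$ together with standard divisor-sum estimates, which remain acceptable for suitably chosen $P_1$ and $L$.

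The main term is a bilinear form in $(p, m)$. Cauchy--Schwarz in the $m$-variable followed by expansion of the square separates the contribution into a diagonal piece ($p_1 = p_2$) and an off-diagonal piece ($p_1 \neq p_2$). The diagonal reduces to $\sum_n \omega_P(n) |G(n)|^2 \ll L \sum_n \tau_{\CQ}(n)^2$ and, after taking a square root and dividing by $L$, furnishes the main term $\frac{\log \eta^{-1}}{\log(X/Q^2)} \sum_{q \in \CQ}(|I_q|/q+1)$ upon optimizing $P_1, P_2$. The off-diagonal, after unfolding $G$, is a sum over pairs $(q_1, q_2) \in \CQ^2$ of correlations
\[
\sum_m \psi_{q_1}\bigl(\tfrac{p_1 m - a_{q_1}}{q_1}\bigr)\, \overline{\psi_{q_2}\bigl(\tfrac{p_2 m - a_{q_2}}{q_2}\bigr)}\, \1_{p_i m \equiv a_{q_i}\,(q_i)\text{ for }i=1,2},
\]
where the congruence conditions pin $m$ to an arithmetic progression modulo $\operatorname{lcm}(q_1,q_2) \leq 4Q^2$. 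These are precisely the type-II sums for shifted nilsequences to be bounded in Section~\ref{sec:type-II}; using the total $\eta$-equidistribution of each $g_q$, they contribute the final error $\eta^c X \log X \max_q \|\varphi_q\|_{\text{Lip}(\CX_q)}$.

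The principal difficulty lies in the off-diagonal step: the type-II estimate must simultaneously handle a pair of distinct nilsequences $(\psi_{q_1}, \psi_{q_2})$ along a progression of modulus as large as $Q^2$, and must save enough in $\eta$ to dominate, after summation over the pairs $(q_1, q_2)$, the trivial pointwise bound. It is this constraint that forces the specific Ramar\'e range, with $P_2$ of order $(X/Q^2)^{1/2}$ (so that the dilated nilsequence is still equidistributed on an interval of length $\gtrsim Q$) and $P_1$ of order $\eta^{-c_0}$, which together produce the characteristic saving $\log \eta^{-1}/\log(X/Q^2)$ in the main term.
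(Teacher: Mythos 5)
Your overall architecture --- linearize with unimodular weights, apply a multiplicative orthogonality criterion to reduce to bilinear correlations $\sum_m F(pm)\overline{F(p'm)}$, and feed those into a type-II estimate for pairs of nilsequences along progressions modulo $[q,q']\le 4Q^2$ --- matches the paper's. But the specific orthogonality criterion you invoke cannot deliver the stated bound, and this is not a technicality: it is precisely why the paper uses Ramar\'e's identity rather than the K\'atai/Bourgain--Sarnak--Ziegler variance argument. Your step
$L\sum_n f(n)G(n)=\sum_p f(p)\sum_m f(m)G(pm)+O\bigl((XL)^{1/2}\|G\|_2\bigr)$
has a fatal error term. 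First, the global Tur\'an--Kubilius bound $\sum_{n\le X}(\alpha_P(n)-L)^2\ll XL$ ignores the sparse support of $G$: the best available bound $\|G\|_2^2\ll T\log Q$ (with $T=\sum_{q\in\CQ}(|I_q|/q+1)$) makes the error, after dividing by $L$, of size $(XT\log Q/L)^{1/2}$, which exceeds the target $\frac{\log\eta^{-1}}{\log(X/Q^2)}T$ by a factor of order $(X/(TL))^{1/2}\gg(Q/L)^{1/2}$ already when $\CQ$ is a single modulus. Second, even restricting Tur\'an--Kubilius to the progressions (so the relative error becomes $\asymp L^{-1/2}$), one has $L=\sum_{P_1<p\le P_2}1/p\ll\log\log X$ for \emph{any} choice of $P_1,P_2$, so the best saving this method can ever produce is $(\log\log X)^{-1/2}$; the claimed saving is $\asymp\frac{\log\log X}{\log X}$ when $\eta$ is a power of $\log X$ and $Q\le X^{1/2-\sigma}$. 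No optimization of $P_1,P_2$ closes that gap, and your assertion that the diagonal of the Cauchy--Schwarz ``furnishes the main term'' is accordingly not correct: in the BSZ scheme the diagonal plus variance terms yield $L^{-1/2}$, not $\log P_1/\log P_2$.

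The paper avoids the variance step entirely. With $Y=\eta^{-1}$, $Z=(X/Q^2)^{1/20}$ and the weight $w(n)=(\#\{Y\le p<Z:p\mid n\}+1)^{-1}$, Ramar\'e's identity $\sum_{Y\le p<Z,\ p\mid n}w(n/p)=1$ holds \emph{exactly} for every $[Y,Z)$-squarefree $n$ having a prime factor in $[Y,Z)$, so there is no $O((XL)^{1/2}\|G\|_2)$ term at all. The only integers not captured are those free of prime factors in $[Y,Z)$, and these are counted by an upper-bound sieve applied separately inside each progression $n\equiv a_q\pmod q$, $n\in I_q$ (Lemma~\ref{lem:E-sieve}); this produces the term $\frac{\log Y}{\log Z}\cdot T\asymp\frac{\log\eta^{-1}}{\log(X/Q^2)}\cdot T$, which is where the first term of the bound actually comes from. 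All remaining losses (non-squarefree $n$, the coprimality condition $(m,p)=1$) are controlled by the $L^1$ estimate $\sum_{n\le X,\ D\mid n}|F(n)|\ll T/D$ (Lemma~\ref{lem:ramare-l1bound}), again proportional to $T$. The bilinear piece is then handled by Cauchy--Schwarz and Lemma~\ref{lem:typeII} essentially as you describe. If you replace your orthogonality step by this one, the rest of your outline is sound.
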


Thus one obtains a saving of (at most) $\log \eta^{-1}/\log (X/Q^2)$ compared to the trivial bound. The attentive reader may notice an extra factor $\log X$ in the second term of the bound, which prevents one from taking any $\eta = o(1)$ and still getting a nontrivial estimate. This extra factor mainly comes from the type-II estimate (Lemma~\ref{lem:typeII}); see the comments after its statement. It won't be a concern for us since we will take $\eta$ to be a large negative power of $\log X$.

To deduce Theorem~\ref{thm:BV-nil} from Proposition~\ref{prop:BV-equidist}, we may assume that $A$ is sufficiently large depending on $d,s$, that $X$ is sufficiently large depending on $A,d,s$, and that $Q \leq X^{1/2}(\log X)^{-B}$ for some sufficiently large $B = B(A)$, since otherwise the bound~\eqref{eq:q-cancel} is trivial. In particular, it suffices to establish the bound $O_{A}(Q (\log X)^{-2A})$ for the number of exceptional moduli. 

\subsection{Reducing to completely multiplicative functions}\label{sec:BV-nil-complete-mult}

The first technical step of the reduction is to pass from the M\"{o}bius function $\mu$ to its completely multiplicative cousin $\lambda$. In this subsection we deduce Theorem~\ref{thm:BV-nil} for  $\mu$, assuming that it has already been proved for the Liouville function $\lambda$. This step is summarized in the following lemma.

\begin{lemma}\label{lem:reduction1}
Let $X \geq 2$ be large, and let $a \pmod q$ be a residue class with $(a,q) = 1$. Let $\ee \in (0,1)$, and assume that $q \leq \ee X^{1/2}(\log X)^{-3}$.  Let $f: \Z \to \C$ be a multiplicative function with $|f(n)| \leq 1$, and let $f': \Z \to \C$ be the completely multiplicative function defined by $f'(p) = f(p)$ for each prime $p$. Let $c: \Z \to \C$ be an arbitrary function with $|c(n)| \leq 1$. If
\[ \left| \sum_{\substack{n \leq X \\ n \equiv a\pmod{q}}} f(n) c(n) \right| \geq \ee \frac{X}{q}, \]
then there is a positive integer $\ell \ll \ee^{-3}$ with $(\ell, q)=1$, such that
\[ \left| \sum_{\substack{n \leq X/\ell \\ n \equiv a\ell^{-1} \pmod{q}}} f'(n) c(\ell n) \right| \gg \ee \frac{X}{\ell q}. \]
\end{lemma}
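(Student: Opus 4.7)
The plan is to write $f$ as a Dirichlet convolution $f = f' * h$ for a multiplicative function $h$ supported on powerful integers, and then pigeonhole over the outer divisor $d$. Since $f'$ is completely multiplicative its Dirichlet inverse equals $\mu \cdot f'$, so setting $h := f * (\mu \cdot f')$ a direct computation at prime powers yields $h(p) = f(p) - f(p) = 0$ and, for $k \geq 2$, $h(p^k) = f(p^k) - f(p) f(p^{k-1})$. In particular $|h(p^k)| \leq 2$ and $h$ vanishes off the powerful integers, so the Dirichlet series $\sum_d |h(d)|/d^s$ has Euler product bounded by $\prod_p (1 + 2 p^{-2s}/(1 - p^{-s}))$ and converges absolutely for $\mathrm{Re}(s) > 1/2$. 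In particular $C_0 := \sum_d |h(d)|/d$ and $C_1 := \sum_d |h(d)|/d^{2/3}$ are absolute constants.

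Substituting $f = f' * h$, writing $n = d m$, and using that $(a,q)=1$ forces $(d,q)=1$, we obtain
\[ \sum_{\substack{n \leq X \\ n \equiv a \pmod{q}}} f(n) c(n) = \sum_{\substack{d \text{ powerful} \\ (d, q) = 1}} h(d) \sum_{\substack{m \leq X/d \\ m \equiv a d^{-1} \pmod{q}}} f'(m) c(dm). \]
We truncate the outer sum at a threshold $L$ to be chosen. Rankin's trick gives $\sum_{d > L} |h(d)|/d \leq L^{-1/3} C_1$, and bounding each inner sum trivially by $X/(dq) + 1$, the tail $d > L$ contributes at most $C_1 L^{-1/3} X/q + \sum_{d \leq X,\, \text{powerful}} |h(d)|$. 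The second summand is $O(X^{1/2} \log X)$ (coming from the double pole of $\sum_d |h(d)|/d^s$ at $s = 1/2$), which under the hypothesis $q \leq \ee X^{1/2}(\log X)^{-3}$ is at most $\ee X /(q (\log X)^2)$ and hence negligible; choosing $L = (8 C_1 / \ee)^3$ makes the first summand at most $\ee X /(8 q)$. Hence the truncated outer sum over $d \leq L$ still has absolute value $\gg \ee X/q$.

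To extract a single $\ell$, suppose for contradiction that $|{\rm inner}_d| < \delta \ee X/(d q)$ for every powerful $d \leq L$ with $(d, q) = 1$, where ${\rm inner}_d$ denotes the inner sum in the display above. Then the truncated outer sum has absolute value at most $\delta C_0 \ee X/q$, which for $\delta$ a sufficiently small absolute constant contradicts the preceding paragraph. Thus some powerful $\ell \leq L \ll \ee^{-3}$ with $(\ell, q) = 1$ satisfies $|{\rm inner}_\ell| \gg \ee X /(\ell q)$, which (after relabelling the dummy variable $m$ as $n$) is the claimed bound. The delicate point is the tail estimate: the Rankin exponent $2/3$ lies inside the region of convergence of $\sum |h(d)|/d^s$ only because $h$ vanishes at primes (the abscissa of convergence drops from $1$ to $1/2$), and optimising this exponent produces the specific constant $3$ in $\ell \ll \ee^{-3}$.
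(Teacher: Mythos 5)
Your proof is correct and follows the same route as the paper's: write $f = f' * g$ (your $h$), truncate the outer sum at $L \asymp \ee^{-3}$, and pigeonhole over the surviving divisors. The only real difference is in how the two tail estimates are obtained. The paper packages both into a single lemma (Lemma~\ref{lem:complete-mult-g}) via Rankin's trick at $\sigma = 1/2 + 1/(10\log N)$, accepting harmless $(\log N)^2$ losses; you instead work at the fixed exponent $2/3$ for $\sum_{d>L}|h(d)|/d$, which yields the cleaner absolute constant $C_1$, and then assert $\sum_{d \leq X}|h(d)| = O(X^{1/2}\log X)$ by appeal to a double pole of $\sum |h(d)| d^{-s}$ at $s=1/2$. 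That bound is true but not a one-line consequence of the Euler product alone; it requires either a Perron/Tauberian step or an elementary decomposition of powerful numbers as $a^2b^3$. Since the hypothesis $q \leq \ee X^{1/2}(\log X)^{-3}$ leaves room to spare, the simplest repair is to apply Rankin at $\sigma = 1/2 + 1/\log X$ for this second estimate as well (which is exactly what the paper does), giving $O(X^{1/2}(\log X)^2)$ and avoiding the unproved sharper claim. Your observation that $h$ vanishes off powerful integers is correct and is indeed what makes the double-pole heuristic plausible, but it is not needed once one settles for Rankin's slightly lossier bound.
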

 
To deduce Theorem~\ref{thm:BV-nil} for $\mu$, apply Lemma~\ref{lem:reduction1} with $f=\mu$ (so that $f' = \lambda$) and $\ee = C \log\log X/\log (X/Q^2)$ for some large constant $C$ depening on $A$. For each $q$ satisfying
\begin{equation}\label{eq:reduction1-badq} 
\left| \sum_{\substack{n \leq X \\ n \equiv a_q\pmod{q}}} \mu(n) \psi_q((n-a_q)/q) \right| \geq \ee \frac{X}{q},
\end{equation}
Lemma~\ref{lem:reduction1} produces a positive integer $\ell = \ell_q \ll \ee^{-3}$ with $(\ell_q,q) = 1$, such that
\[ \left| \sum_{\substack{n \leq X/\ell_q \\ n \equiv a_q\ell_q^{-1} \pmod{q}}} \lambda(n) \psi_q((\ell_q n - a_q)/q) \right| \gg \ee \frac{X}{\ell_q q}. \]
For each $\ell \ll \ee^{-3}$, apply Theorem~\ref{thm:BV-nil} for $\lambda$, with $X$ replaced by $X/\ell$, to conclude that there are at most $Q(\log X)^{-3A}$ moduli $q$ satisfying~\eqref{eq:reduction1-badq} with $\ell_q = \ell$. It follows that the total number of moduli $q$ satisfying~\eqref{eq:reduction1-badq} is $O(\ee^{-3} Q (\log X)^{-3A}) = O(Q(\log X)^{-2A})$, as desired.
 
In the remainder of this subsection, we give the rather standard proof of Lemma~\ref{lem:reduction1}, starting with a basic lemma. 

\begin{lemma}\label{lem:complete-mult-g}
Let $f: \Z \to \C$ be a multiplicative function with $|f(n)| \leq 1$, and let $f': \Z \to \C$ be the completely multiplicative function defined by $f'(p) = f(p)$ for each prime $p$. Let $g$ be the multiplicative function with $f = f'*g$. Then for any $N \geq 2$ we have
\[ \sum_{n \geq N} \frac{|g(n)|}{n} \ll N^{-1/2}(\log N)^2, \ \  \sum_{n \leq N} |g(n)|  \ll N^{1/2} (\log N)^2. \]
\end{lemma}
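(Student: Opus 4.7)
The plan is to exploit the multiplicative structure of $g$ and then apply Rankin's trick to its Dirichlet series $\sum_n |g(n)|/n^s$ just to the right of $s = 1/2$. Since $f'$ is completely multiplicative with $f'(p) = f(p)$, its Dirichlet inverse is $\mu \cdot f'$, so $g = f*(\mu \cdot f')$ is multiplicative with $g(1) = 1$. First I would compare local Euler factors at each prime $p$: the identity
\[ \Bigl( 1 - \frac{f(p)}{p^s} \Bigr) \sum_{k \geq 0} \frac{f(p^k)}{p^{ks}} \;=\; 1 + \sum_{k \geq 1} \frac{f(p^k) - f(p) f(p^{k-1})}{p^{ks}} \]
yields $g(p^k) = f(p^k) - f(p) f(p^{k-1})$ for $k \geq 1$. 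In particular $g(p) = 0$, so $g$ is supported on squarefull integers, and $|g(p^k)| \leq 2$ for every prime $p$ and every $k \geq 1$.

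Next I would bound the Dirichlet series at $s = 1/2 + 1/\log N$. The Euler product together with the pointwise bound $|g(p^k)| \leq 2$ gives
\[ \sum_{n \geq 1} \frac{|g(n)|}{n^s} \;\leq\; \prod_p \Bigl( 1 + \sum_{k \geq 2} \frac{2}{p^{ks}} \Bigr) \;=\; \prod_p \Bigl( 1 + \frac{O(1)}{p^{2s}} \Bigr). \]
Taking logarithms, the leading contribution is $2 \sum_p 1/p^{2s} = 2 \sum_p 1/p^{1 + 2/\log N}$, which by Mertens' theorem (or $\zeta(1+\delta) \sim 1/\delta$) equals $2 \log\log N + O(1)$; exponentiating, $\sum_n |g(n)|/n^s \ll (\log N)^2$.

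Both claimed bounds then follow from a one-line Rankin trick with this $s$. For $n \geq N$ one has $s - 1 < 0$, so $1/n \leq N^{s-1}/n^s$, whence
\[ \sum_{n \geq N} \frac{|g(n)|}{n} \;\leq\; N^{s-1} \sum_n \frac{|g(n)|}{n^s} \;\ll\; N^{-1/2}(\log N)^2. \]
Similarly, for $n \leq N$ the inequality $1 \leq (N/n)^s$ gives
\[ \sum_{n \leq N} |g(n)| \;\leq\; N^s \sum_n \frac{|g(n)|}{n^s} \;\ll\; N^{1/2}(\log N)^2. \]
The only genuine subtlety is getting the exponent of $\log N$ right: to reach exactly $(\log N)^2$ one has to verify that only the $k = 2$ terms in the Euler product contribute to leading order, with $|g(p^2)| \leq 2$ producing the factor of $2$ in front of $\log \log N$, while the $k \geq 3$ terms contribute only $O(1)$ to $\log \sum_n |g(n)|/n^s$.
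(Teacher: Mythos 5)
Your proof is correct and follows essentially the same route as the paper: establish $g(p)=0$, $|g(p^k)|\leq 2$, apply Rankin's trick at $s = 1/2 + O(1/\log N)$, and bound $\sum_n |g(n)|n^{-s}$ via its Euler product (the paper compares the Euler factors to $\zeta(2\sigma)^2\zeta(3\sigma)^2$ whereas you take logarithms and estimate $\sum_p p^{-2s}$ directly, but these are the same computation).
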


\begin{proof}
It is easy to see that $g(p) = 0$ and $|g(p^k)| \leq 2$ for every prime $p$. Set $\sigma = 1/2 + 1/(10\log N)$ so that $\sigma \in (1/2,1)$ and $N^{\sigma} \asymp N^{1/2}$. By Rankin's trick we have
\[ \sum_{n \geq N} \frac{|g(n)|}{n} \leq \sum_n \frac{|g(n)|}{n} \left(\frac{n}{N}\right)^{1-\sigma} \ll N^{-1/2} \sum_n |g(n)|n^{-\sigma}, \]
and similarly
\[ \sum_{n \leq N} |g(n)| \leq \sum_n |g(n)| \left(\frac{N}{n}\right)^{\sigma} \ll N^{1/2} \sum_n |g(n)|n^{-\sigma}. \]
Thus it suffices to establish the bound
\[ \sum_n |g(n)|n^{-\sigma} \ll (\log N)^2. \]
We may write the Dirichlet series associated to $|g|$ in terms of its Euler product:
\[ \sum_n |g(n)|n^{-\sigma} = \prod_p \left(1 + |g(p^2)|p^{-2\sigma} + |g(p^3)|p^{-3\sigma} + \cdots \right). \]
Since $|g(p^k)| \leq 2$, we may bound it by
\[ \prod_p \left(1 + p^{-2\sigma} + p^{-4\sigma} + \cdots \right)^2 \left(1 + p^{-3\sigma} + p^{-6\sigma} + \cdots \right)^2  = \zeta(2\sigma)^2 \zeta(3\sigma)^2. \]
Since $\zeta(3\sigma) \ll 1$ and $\zeta(2\sigma) \ll (2\sigma-1)^{-1}$, the desired bound follows immediately.
\end{proof}

\begin{proof}[Proof of Lemma~\ref{lem:reduction1}]
Write $f = f' * g$ for some multiplicative function $g$. We have
\[ \sum_{\substack{n \leq X \\ n \equiv a\pmod{q}}} f(n) c(n) = \sum_{\substack{\ell n\leq X \\ \ell n \equiv a\pmod{q}}} f'(n) g(\ell) c(\ell n) = \sum_{\substack{\ell \leq X \\ (\ell, q)=1}} g(\ell) \left(\sum_{\substack{n \leq X/\ell \\ n \equiv a\ell^{-1} \pmod{q}}} f'(n) c(\ell n) \right).  \]
Let $L \geq 2$ be a parameter. Using the trivial bound $O(X/\ell q + 1)$ for the inner sum, we may apply Lemma~\ref{lem:complete-mult-g} to bound the total contributions  from those terms with $\ell \geq L$ by
\[ \frac{X}{q} \sum_{\ell \geq L} \frac{|g(\ell)|}{\ell} + \sum_{\ell \leq X} |g(\ell)| \ll \frac{X}{qL^{1/2}} (\log L)^2 + X^{1/2} (\log X)^2. \]
We may choose $L \ll \ee^{-3}$ such that the first term above is negligible compared to the lower bound $\ee X/q$, and the second term is already negligible compared to $\ee X/q$ by the assumption on $q$. It follows that 
\[ \sum_{\substack{\ell \leq L \\ (\ell, q)=1}} |g(\ell)| \left| \sum_{\substack{n \leq X/\ell \\ n \equiv a\ell^{-1} \pmod{q}}} f'(n) c(\ell n) \right| \gg \ee \frac{X}{q}. \]
Since $\sum |g(\ell)|\ell^{-1} \ll 1$, there is some $\ell \leq L$ with $(\ell,q) = 1$ such that
\[ \left| \sum_{\substack{n \leq X/\ell \\ n \equiv a\ell^{-1} \pmod{q}}} f'(n) c(\ell n) \right|\gg \ee \frac{X}{\ell q}. \]
This completes the proof of the lemma.
\end{proof}

\subsection{Reducing to equidistributed nilsequences}\label{sec:BV-nil-general}

We now use the factorisation theorem~\cite[Theorem 1.19]{GT-nilsequence} to reduce arbitrary nilsequences to equidistributed ones. This step is summarized in the following lemma, the proof of which is similar to arguments in~\cite[Section 2]{GT-mobius-nil}.

\begin{lemma}\label{lem:reduction2}
Let $X \geq 2$ be large, and let $a \pmod q$ be a residue class with $0 \leq a<q$, $(a,q) = 1$. Let $\ee \in (0,1/2)$. Let $f: \Z \to \C$ be a completely multiplicative function with $|f(n)| \leq 1$. Given
\begin{itemize}
\item a nilmanifold $G/\Gamma$ of dimension at most some $d \geq 1$, equipped with a filtration $G_{\bullet}$ of degree at most some $s \geq 1$ and a $M_0$-rational Mal'cev basis $\CX$ for some $M_0 \geq \ee^{-1}$;
\item a polynomial sequence $g: \Z \to G$ adapted to $G_{\bullet}$;
\item and a Lipschitz function $\varphi: G/\Gamma \to \C$ with $\|\varphi\|_{\text{Lip}(\CX)} \leq 1$,
\end{itemize}
let $\psi: \Z \to \C$ be the function defined by $\psi(n) = \varphi(g(n)\Gamma)$. Assume that
\[ \left| \sum_{\substack{n \leq X \\ n \equiv a\pmod{q}}} f(n) \psi((n-a)/q) \right| \geq \ee \frac{X}{q}. \]
For any $A \geq 2$ large enough depending on $d,s$, we may find $M_0 \leq M \leq M_0^{O_{A}(1)}$, an interval $I \subset [0,X]$ with $|I| \gg X/M^3$, a positive integer $q'$ with $q \mid q'$ and $q' \leq qM$, a residue class $a'\pmod{q'}$ with $0 \leq a' < q'$, $(a',q') = 1$, and moreover
\begin{itemize}
\item a nilmanifold $G'/\Gamma'$ of dimension at most $d$, equipped with a filtration $G'_{\bullet}$ of degree at most $s$ and a $M^{O_{d,s}(1)}$-rational Mal'cev basis $\CX'$;
\item a polynomial sequence $g': \Z \to G'$ adapted to $G'_{\bullet}$ such that $\{g'(m)\}_{1 \leq m \leq X/q}$ is totally $M^{-A}$-equidistributed;
\item and a Lipschitz function $\varphi': G'/\Gamma' \to \C$ with $\|\varphi'\|_{\infty} \leq 1$ and $\|\varphi'\|_{\text{Lip}(\CX')} \leq M^{O_{d,s}(1)}$,
\end{itemize}
such that
\[ \left| \sum_{\substack{n \in I\\ n \equiv a' \pmod{q'}}} f(n) \psi'((n-a')/q') \right| \gg \ee \frac{|I|}{q'}, \]
where $\psi' : \Z \to \C$ is the function defined by $\psi'(n) = \varphi'(g'(n)\Gamma')$. 
\end{lemma}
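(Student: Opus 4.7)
The plan is to apply the Green--Tao factorisation theorem to $g$, partition the summation variable according to the period of the rational factor, freeze the smooth factor on short intervals, and repackage the outcome as a nilsequence on a conjugated subnilmanifold. I would begin by changing variables $n = qm + a$, so that the hypothesis reads
\[ \left| \sum_{0 \leq m \leq (X-a)/q} f(qm + a)\, \varphi(g(m) \Gamma) \right| \geq \ee \frac{X}{q}, \]
and then apply~\cite[Theorem~1.19]{GT-nilsequence} to $g$ on $[1, X/q]$ with some parameter $M \in [M_0, M_0^{O_A(1)}]$, obtaining a factorisation $g = \epsilon\, g''\, \gamma$, where $\epsilon : \Z \to G$ is $(M, X/q)$-smooth, $g''$ takes values in a closed rational subgroup $G'' \subseteq G$ and is totally $M^{-A}$-equidistributed on $[1, X/q]$, and $\gamma$ is $M$-rational with $\gamma(m)\Gamma$ periodic of some period $q_0 \leq M$.

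Next I would partition $[1, X/q]$ into residue classes $m \equiv b \pmod{q_0}$ intersected with intervals $J$ of length $|J| = X/(qM^3)$, short enough that the $(M, X/q)$-smoothness of $\epsilon$ guarantees $d(\epsilon(m), \epsilon(m_0)) \leq \ee/2$ for all $m \in J$ and a chosen base point $m_0 \in J$. The triangle inequality then ensures that one of the $\asymp q_0 M^3$ sub-sums has absolute value $\gg \ee |J|/q_0$; discarding those $b$ with $\gcd(qb + a, q_0) > 1$---whose aggregate contribution is $O(X \omega(q_0)/(q q_0)) = o(\ee X/q)$ and can be absorbed by a slight enlargement of $M$---I pick out a pair $(b, J)$ with $\gcd(qb + a, q_0) = 1$ satisfying
\[ \left| \sum_{\substack{m \in J \\ m \equiv b \pmod{q_0}}} f(qm + a)\, \varphi\bigl(\epsilon(m_0)\, g''(m)\, \gamma_b \Gamma\bigr) \right| \gg \ee \frac{|J|}{q_0}, \]
where $\gamma_b$ denotes a representative of the constant coset $\gamma(m)\Gamma$ on this progression.

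Now I would substitute $m = q_0 m' + b$ and conjugate by $\gamma_b$, producing a polynomial sequence $g'$ into the conjugated nilmanifold $G'/\Gamma' := (\gamma_b^{-1} G'' \gamma_b)/(\gamma_b^{-1} G'' \gamma_b \cap \Gamma)$, adapted to the induced filtration of degree at most $s$, whose total $M^{-A}$-equidistribution inherits from that of $g''$ up to polynomial-in-$M$ losses that are absorbable into the exponent $A$. The Mal'cev basis calculus from~\cite[Appendix~A]{GT-nilsequence} equips $G'/\Gamma'$ with an $M^{O_{d,s}(1)}$-rational Mal'cev basis $\CX'$, and defining $\varphi'(x) := \varphi(\epsilon(m_0)\, \gamma_b \cdot x)$ yields $\|\varphi'\|_\infty \leq 1$ and, since $\epsilon(m_0)$ has Mal'cev coordinates bounded polynomially in $M$, $\|\varphi'\|_{\text{Lip}(\CX')} \leq M^{O_{d,s}(1)}$.

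Finally, with $q' := q q_0$, $a' := qb + a$ (so that $(a', q') = 1$ and $q \mid q' \leq qM$), and $I$ the image of $J$ under $m \mapsto qm + a$, the pigeonholed inequality becomes the claimed lower bound, with $|I| = q|J| \gg X/M^3$. The main obstacle is the Mal'cev bookkeeping in the third paragraph: one must verify that conjugation by the $M$-rational element $\gamma_b$ together with left-translation by the smooth element $\epsilon(m_0)$ preserves the graded filtration structure and yields a Mal'cev basis whose rationality and the Lipschitz norm of $\varphi'$ are both controlled by $M^{O_{d,s}(1)}$. This is where the quantitative loss in $\|\varphi'\|_{\text{Lip}(\CX')}$ enters, and the precise exponent relies on standard but delicate estimates for how Mal'cev coordinates transform under conjugation and translation in graded nilpotent Lie groups.
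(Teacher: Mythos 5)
Your overall strategy mirrors the paper's: apply the factorisation theorem, pigeonhole to a short interval $J$ and a residue class $b$ modulo the period $q_0$ of the rational factor, freeze the smooth factor at a base point $m_0$, and conjugate by a representative of the rational part. However, there is a genuine gap in how you handle the coprimality of $a' = qb + a$ with $q' = qq_0$.

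You propose to discard the residue classes $b \pmod{q_0}$ with $\gcd(qb+a, q_0) > 1$ before pigeonholing, claiming their aggregate contribution is $O(X\omega(q_0)/(qq_0)) = o(\ee X/q)$. This count is wrong. For each prime $p \mid q_0$ with $p \nmid q$, there are $q_0/p$ residues $b \pmod{q_0}$ with $p \mid qb+a$, so the number of bad $b$ can be a positive proportion of $q_0$: already for $q_0 = 2$ with $q$, $a$ odd, half of all residues $b$ are bad. The aggregate contribution of the discarded sub-sums is therefore potentially $\Theta(X/q)$, the same order as the assumed lower bound $\ee X/q$, and cannot be thrown away. Increasing $M$ does not help, since $q_0 \leq M$ can still be $2$.

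The correct fix, which is what the paper does, is not to discard anything: after pigeonholing to an arbitrary pair $(b, J)$, set $c = qb + a$, $d = (c, q_0)$, and observe that $(c,q) = 1$ forces $d = (c, qq_0)$ with $d \leq M$. Writing $q' = qq_0/d$ and $a' = c/d$ gives $(a',q') = 1$, and \emph{complete} multiplicativity of $f$ yields $f(qq_0 m + c) = f(d) f(q'm + a')$. Since $|f(d)| \leq 1$, dividing the lower bound $\gg \ee |J|/(qq_0)$ by $|f(d)|$ only strengthens it, producing $\gg \ee |I|/q'$ on the interval $I = d^{-1}J$ of length $|I| \gg X/M^3$. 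This is also precisely where the hypothesis that $f$ be completely (not merely) multiplicative is used, a point your argument does not appear to engage with. The remaining Mal'cev bookkeeping in your third and fourth paragraphs is the same as the paper's and is fine as a sketch.
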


To deduce Theorem~\ref{thm:BV-nil} for $\lambda$ from Proposition~\ref{prop:BV-equidist}, apply Lemma~\ref{lem:reduction2} with $f=\lambda$, $\ee = C \log\log X/\log (X/Q^2)$ for some large constant $C$ depending on $A$, and $M_0 = (\log X)^C$ for some large constant $C$ depending on $d,s$. For each $q$ satisfying
\begin{equation}\label{eq:reduction2-badq} 
\left| \sum_{\substack{n \leq X \\ n \equiv a_q\pmod{q}}} \lambda(n) \psi_q((n-a_q)/q) \right| \geq \ee \frac{X}{q},
\end{equation}
Lemma~\ref{lem:reduction2} produces $M_q, I_q, a'\pmod{q'}, G'/\Gamma', \psi' = \varphi' \circ g'$, all of which depending on $q$ (and some of these dependences are suppressed for notational convenience), such that
\[ \left| \sum_{\substack{n \in I_q\\ n \equiv a' \pmod{q'}}} \lambda(n) \psi'((n-a')/q') \right| \gg \ee \frac{|I_q|}{q'}. \]
Divide the possible values of $M_q$ into $O_A(1)$ subintervals of the form $[M^{1/2}, M]$ with $M_0 \leq M \leq M_0^{O_{A}(1)}$. Given $M$, let $\CQ = \CQ_M$ be the set of moduli $q \in \CQ$ with $M^{1/2} \leq M_q \leq M$, and let $\CQ' = \CQ'_M$ be the set of $q'$ arising from $q \in \CQ$. It suffices to show that
\[ |\CQ| \ll Q(\log X)^{-2A}. \]
Since $q'/q$ is a positive integer at most $M$, each $q'$ occurs with multiplicity at most $M$. Thus $|\CQ| \leq M |\CQ'|$. Before applying Proposition~\ref{prop:BV-equidist} we need to ensure that each $\varphi'$ has average $0$. For $q' \in \CQ'$ either
\begin{equation}\label{eq:red2-type1} 
\left| \sum_{\substack{n \in I_q\\ n \equiv a' \pmod{q'}}} \lambda(n)  \right| \gg \ee \frac{|I_q|}{q'} \gg \frac{X}{q'(\log X)^{O_A(1)}} 
\end{equation}
or
\begin{equation}\label{eq:red2-type2}
\left| \sum_{\substack{n \in I_q\\ n \equiv a' \pmod{q'}}} \lambda(n) \left(\psi'((n-a')/q') - \int \varphi'\right) \right| \gg \ee \frac{|I_q|}{q'}.
\end{equation}

To bound the number of $q'$ satisfying~\eqref{eq:red2-type1}, note that the Bombieri-Vinogradov inequality (for $\lambda$) is applicable since $q' \leq 2QM \leq X^{1/2}(\log X)^{-B/2}$ (recall the assumption that $Q \leq X^{1/2}(\log X)^{-B}$ for some large $B$). By choosing $b$ large enough, we may ensure that the number of $q'$ satisfying~\eqref{eq:red2-type1} is at most $QM^{-A}$. 

Now let $\CQ_2' \subset \CQ'$ be the set of $q' \in \CQ'$ satisfying~\eqref{eq:red2-type2}.  To bound the size of $\CQ_2'$, we apply Proposition~\ref{prop:BV-equidist} after replacing each $\varphi'$ by $\varphi' - \int\varphi'$ and dyadically dividing the possible values of $q'$. This leads to
\[ \ee \sum_{q' \in \CQ_2'} \frac{|I_q|}{q'} \ll \frac{\log M^{A}}{\log (X/Q^2M^2)}  \sum_{q' \in \CQ_2'}\left(\frac{|I_q|}{q'} + 1\right) + M^{-cA} X(\log X)^2 M^{O_{d,s}(1)}, \]
for some $c = c(d,s) > 0$. The first term on the right can be made negligible compared to the left hand side, if the constant $C$ in the choice of $\ee$ is taken large enough in terms of $A$. Hence
\[ \ee\frac{X}{M^3} \cdot \frac{|\CQ_2'|}{QM} \ll \ee \sum_{q' \in \CQ'} \frac{|I_q|}{q'} \ll M^{-cA + O_{d,s}(1)} X. \]
It follows that $|\CQ_2'| \ll Q M^{-cA + O_{d,s}(1)}$. Combining the estimates for the two types of $q'$ together, we obtain
\[ |\CQ| \leq M|\CQ'| \ll QM^{-cA + O_{d,s}(1)} \ll Q(\log X)^{-2A}, \]
if the constant $C$ in the choice of $M$ is large enough depending on $d,s$. This completes the deduction of Theorem~\ref{thm:BV-nil}.

\begin{proof}[Proof of Lemma~\ref{lem:reduction2}]
Let $C$ be a large constant (depending on $d,s$). Apply the factorisation theorem~\cite[Theorem 1.19]{GT-nilsequence} to find $CM_0 \leq M \leq M_0^{O_A(1)}$, a rational subgroup $\widetilde{G} \subset G$, a Mal'cev basis $\widetilde{\CX}$ for $\widetilde{G}/\widetilde{\Gamma}$ (where $\widetilde{\Gamma} = \Gamma\cap \widetilde{G}$) in which each element is an $M$-rational combination of the elements of $\CX$, and a decomposition $g = s \widetilde{g} \gamma$ into polynomial sequences $s,\widetilde{g},\gamma: \Z \to G$ with the following properties:
\begin{enumerate}
\item $s$ is $(M, X/q)$-smooth in the sense that $d(s(n),\text{id}) \leq M$ and $d(s(n), s(n-1)) \leq qM/X$ for each $1 \leq n \leq X/q$;
\item $\widetilde{g}$ takes values in $\widetilde{G}$, and moreover $\{\widetilde{g}(n)\}_{1 \leq n \leq X/q}$ is totally $M^{-CA}$-equidistributed in $\widetilde{G}/\widetilde{\Gamma}$ (using the metric induced by the Mal'cev basis $\widetilde{\CX}$);
\item $\gamma$ is $M$-rational in the sense that for each $n \in \Z$, $\gamma(n)^r \in \Gamma$ for some $1 \leq r \leq M$. Moreover, $\gamma$ is periodic with period $t \leq M$.
\end{enumerate}
We may assume that $X \geq qM^3$, since otherwise the conclusion holds trivially. After a change of variables $n = qm+a$, we may rewrite the assumption as
\[ \left| \sum_{m \leq X/q} f(qm+a) \psi(m) \right| \gg \ee \frac{X}{q}. \]
Dividing $[0,X/q]$ into $O(M^2)$ intervals of equal length and then further divide them into residue classes modulo $t$, we may find an interval $J \subset [0,X]$ with $|J| \asymp X/M^2$ and some residue class $b\pmod{t}$, such that
\begin{equation}\label{eq:q-bad-prog} 
\left| \sum_{\substack{m \equiv b\pmod{t} \\ qm+a \in J}} f(qm+a) \varphi(s(m) \widetilde{g}(m) \gamma(m)) \right| \gg \ee \frac{|J|}{qt}.
\end{equation}
Pick any $m_0$ counted in the sum (i.e. $m_0 \equiv b\pmod{t}$ and $qm_0+a \in J$), and note that we may replace $s(m)$ in~\eqref{eq:q-bad-prog} by $s(m_0)$ with a negligible error, since $\varphi$ has Lipschitz norm at most $1$ and
\[ d(s(m)\widetilde{g}(m)\gamma(m), s(m_0)\widetilde{g}(m)\gamma(m)) = d(s(m), s(m_0)) \ll M^{-1} \]
for all $m$ with $qm+a \in J$ by the right invariance of $d$ and the smoothness property of $s$. Moreover, by the periodicity of $\gamma$, we may replace $\gamma(m)$ in~\eqref{eq:q-bad-prog} by $\gamma(m_0)$. Now let $g'$ be the polynomial sequence defined by
\[ g'(m) = \gamma(m_0)^{-1} \widetilde{g}(tm + b) \gamma(m_0), \]
taking values in $G' = \gamma(m_0)^{-1} \widetilde{G} \gamma(m_0)$, and let $\varphi'$ be the automorphic function on $G'$ defined by
\[ \varphi'(x) = \varphi(s(m_0) \gamma(m_0) x). \]
The desired properties about $G'/\Gamma', g', \varphi'$ can be established via standard ``quantitative nil-linear algebra" (see the claim at the end of~\cite[Section 2]{GT-mobius-nil}). After a change of variables replacing $m$ by $tm + b$, the inequality~\eqref{eq:q-bad-prog} can be rewritten as
\[ \left| \sum_{m \colon qtm + c \in J} f(qtm + c) \varphi'(g'(m)) \right| \gg \ee \frac{|J|}{qt}, \]
where $c = qb + a$. This is almost what we need, but there is the slight issue that $c$ may not be coprime with $t$. Let $d = (c, t)$ so that $d \leq M$. Let $q' = qt/d$ and $a' = c/d$ so that $(a',q') = 1$. Let $I = d^{-1}J$ so that $|I| \gg X/M^3$. Since $f$ is completely multiplicative, we have
\[ \left| \sum_{m \colon q'm + a' \in  I} f(q'm + a') \varphi'(g'(m)) \right| \gg \ee \frac{|I|}{q'}. \]
This completes the proof of the lemma.
\end{proof}

\section{The minor arc case: Proof of Proposition~\ref{prop:BV-equidist}}\label{sec:BV-equidist}

In this section we prove Proposition~\ref{prop:BV-equidist}, which is the minor arc case of our main theorem and applies to all $1$-bounded multiplicative functions. For convenience write 
\[ T = \sum_{q \in \CQ} \left(\frac{|I_q|}{q} + 1\right). \]
We may assume that $(X/Q^2)^{-1/20} < \eta < (\log X)^{-C}$ for some sufficiently large $C = C(d,s) > 0$, since otherwise the bound is trivial. We may further assume that $|I_q| \geq X^{0.9}$ for each $q \in \CQ$, since the contributions from those $q$ with $|I_q| \leq X^{0.9}$ are trivially acceptable. After multiplying each $\varphi_q$ by an appropriate scalar, it suffices to prove the desired inequality with the absolute value sign removed. Set
\[ Y = \eta^{-1}, \ \ Z = (X/Q^2)^{1/20}, \]
so that $2 \leq Y < Z \leq X^{1/20}$. Let $F$ be the function defined by
\[ F(n) =  \sum_{\substack{q \in \CQ \\ n \in I_q \\ n \equiv a_q\pmod{q}}} \psi_q((n-a_q)/q). \]
Clearly $F$ is supported on $[0,X]$. The desired bound can be rewritten as
\begin{equation}\label{eq:f-F-orth} 
\sum_{n \leq X} f(n) F(n) \ll \frac{\log Y}{\log Z} \cdot T + \eta^c X\log X \max_{q \in \CQ} \|\varphi_q\|_{\text{Lip}(\CX_q)}. 
\end{equation}
As alluded to in the introduction, this will be proved using an orthogonality criterion for multiplicative functions. A general principle of this type is given in~\cite[Proposition 2.2]{Gre16}. In the notations there, the terms giving rise to $E_{\text{triv}}$ and $E_{\text{sieve}}$ will be dealt with by Lemma~\ref{lem:ramare-l1bound} and Lemma~\ref{lem:E-sieve}, respectively. In particular, the $E_{\text{sieve}}$ term leads to the first bound in~\eqref{eq:f-F-orth}. The bilinear (type-II) sum $E_{\text{bilinear}}$ will be dealt with in Lemma~\ref{lem:typeII}, leading to the second bound in~\eqref{eq:f-F-orth}.

Unfortunately we cannot directly apply~\cite[Proposition 2.2]{Gre16}, since for example our function $F$ is not necessarily bounded. In the remainder of this section we reproduce the argument from~\cite{Gre16} with suitable modifications to prove~\eqref{eq:f-F-orth}. Recall the definition of Ramar{\'e}'s weight function:
\[ w(n) = \frac{1}{\#\{Y \leq p < Z: p \mid n\} + 1}. \]
Introduce also the function $\mu_{[Y,Z)}^2$, which is the indicator function of the set of integers $n$ that is not divisible by the square of any prime $p \in [Y,Z)$. To prove~\eqref{eq:f-F-orth}, we first dispose of those terms with $\mu_{[Y,Z)}^2(n) = 0$:
\[ \sum_{\substack{n \leq X \\ \mu_{[Y,Z)}^2(n) = 0}} |f(n) F(n)| \leq \sum_{Y \leq p < Z} \sum_{\substack{n \leq X \\ p^2 \mid n}} |F(n)|. \]
The following lemma will be used repeatedly.

\begin{lemma}\label{lem:ramare-l1bound}
For any positive integer $D \leq X^{0.4}$ we have
\[ \sum_{\substack{n \leq X \\ D \mid n}} |F(n)| \ll \frac{T}{D}. \]
\end{lemma}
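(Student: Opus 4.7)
The plan is to prove the lemma by a completely elementary counting argument, exploiting the bound $\|\psi_q\|_\infty \leq 1$ and the size assumption $|I_q| \geq X^{0.9}$ that was made at the start of the section.

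First, since $\|\psi_q\|_\infty \leq \|\varphi_q\|_\infty \leq 1$, the triangle inequality applied to the definition of $F$ gives
\[ |F(n)| \leq \#\{q \in \CQ : n \in I_q,\ n \equiv a_q \pmod{q}\}. \]
Swapping the order of summation, I would obtain
\[ \sum_{\substack{n \leq X \\ D \mid n}} |F(n)| \leq \sum_{q \in \CQ} \#\bigl\{n \in I_q : D \mid n,\ n \equiv a_q \pmod{q}\bigr\}. \]

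Next, I would analyze the inner count. The simultaneous congruences $n \equiv 0 \pmod D$ and $n \equiv a_q \pmod q$ are consistent only if $(D,q) \mid a_q$; since $(a_q,q) = 1$, this forces $(D,q) = 1$, in which case the Chinese remainder theorem pins $n$ down to a single residue class modulo $qD$. Hence the inner count is at most $|I_q|/(qD) + 1$, and it vanishes unless $(D,q)=1$.

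Finally, I would absorb the trailing $+1$ into the main term using the hypotheses. Since $|I_q| \geq X^{0.9}$, $q < 2Q \leq X^{1/2}$ (from $10Q^2 \leq X$), and $D \leq X^{0.4}$, one has
\[ \frac{|I_q|}{qD} \geq \frac{X^{0.9}}{2Q \cdot X^{0.4}} \geq \frac{X^{0.5}}{2Q} \gg 1, \]
so $|I_q|/(qD) + 1 \ll |I_q|/(qD)$. Summing over $q \in \CQ$ gives
\[ \sum_{\substack{n \leq X \\ D \mid n}} |F(n)| \ll \frac{1}{D}\sum_{q \in \CQ} \frac{|I_q|}{q} \leq \frac{T}{D}, \]
which is the desired inequality. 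There is no genuine obstacle here: everything is a direct consequence of the trivial bound on $\psi_q$ combined with the reduction $|I_q| \geq X^{0.9}$ already made at the top of Section~\ref{sec:BV-equidist}; the only mildly delicate point is noticing that the coprimality $(a_q,q)=1$ automatically forces $(D,q)=1$ in the range of summation.
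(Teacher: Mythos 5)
Your proof is correct and is essentially the same as the paper's: both start from the trivial pointwise bound $|F(n)| \leq \#\{q \in \CQ : n \in I_q,\ n \equiv a_q \pmod q\}$, swap the order of summation, observe that $(a_q,q)=1$ forces $(D,q)=1$ for the inner count to be nonempty, and bound that count by $O(|I_q|/(qD))$. You are slightly more explicit than the paper in spelling out why the trailing $+1$ from the counting step is absorbed into the main term via the standing reduction $|I_q| \geq X^{0.9}$ together with $D \leq X^{0.4}$ and $q < 2Q < X^{1/2}$; the paper compresses this into the single assertion that the count is $O(|I_q|/qD)$.
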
 

\begin{proof}
Using the trivial bound
\begin{equation}\label{eq:F-trivial} 
|F(n)| \leq \sum_{\substack{q \in \CQ \\ n \in I_q \\ n \equiv a_q\pmod{q}}} 1,
\end{equation}
we obtain
\[ \sum_{\substack{n \leq X \\ D \mid n}} |F(n)|  \leq \sum_{q \in \CQ} \sum_{\substack{n \in I_q \\ D \mid n \\ n \equiv a_q\pmod{q}}} 1. \]
Since $(a_q,q) = 1$, the inner sum over $n$ is nonempty unless $(q,D) = 1$, in which case it is $O(|I_q|/qD)$. The conclusion follows immediately.
\end{proof}

Since $Z^2 \leq X^{0.4}$, Lemma~\ref{lem:ramare-l1bound} implies that
\[ \sum_{\substack{n \leq X \\ \mu_{[Y,Z)}^2(n) = 0}} |f(n)F(n)| \ll T\sum_{Y \leq p < Z} \frac{1}{p^2} \ll \frac{T}{Y}. \]
Hence the contributions from those $n \leq X$ with $\mu_{[Y,Z)}^2(n) = 0$ are acceptable. If $\mu_{[Y,Z)}^2(n) = 1$, then we have the Ramar{\'e} identity
\[ \sum_{\substack{Y \leq p < Z \\ p \mid n}} w(n/p) = \begin{cases} 1 & \text{if }p \mid n\text{ for some }Y \leq p < Z \\ 0 & \text{otherwise.} \end{cases} \]
The following lemma disposes of those $n$ not divisible by any $p \in [Y, Z)$:

\begin{lemma}\label{lem:E-sieve}
We have
\[ \sum_{n \leq X} |F(n)| \cdot \1_{(n,\prod_{Y \leq p < Z}p) = 1} \ll \frac{\log Y}{\log Z} \cdot T. \]
\end{lemma}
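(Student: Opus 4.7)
The plan is to reduce to a standard sieve estimate inside each arithmetic progression and then sum over $q \in \CQ$. First, by the trivial bound~\eqref{eq:F-trivial} and swapping the order of summation,
\[ \sum_{n \leq X} |F(n)| \cdot \1_{(n,\prod_{Y \leq p < Z}p) = 1} \leq \sum_{q \in \CQ} S_q, \]
where $S_q$ counts those $n \in I_q$ with $n \equiv a_q \pmod{q}$ and $(n, P_q) = 1$, with $P_q \assign \prod_{Y \leq p < Z,\, p\nmid q} p$. The restriction to primes not dividing $q$ is free: the conditions $n \equiv a_q\pmod{q}$ and $(a_q,q)=1$ together force $(n,q)=1$, so any $p \mid q$ automatically satisfies $p \nmid n$. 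It therefore suffices to show $S_q \ll (\log Y/\log Z)(|I_q|/q + 1)$ for each $q \in \CQ$.

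To bound $S_q$ I would apply the fundamental lemma of sieve theory at level $D = X^{1/3}$. Since $Z \leq X^{1/20}$, the parameter $s = \log D/\log Z \geq 20/3$ is comfortably large, so the lemma yields
\[ S_q \ll \frac{|I_q|}{q}\prod_{\substack{Y \leq p < Z \\ p \nmid q}}\Bigl(1 - \frac{1}{p}\Bigr) + D. \]
By Mertens, $\prod_{Y \leq p < Z}(1-1/p) \asymp \log Y/\log Z$, and the extraneous factor coming from removing primes dividing $q$ satisfies
\[ \prod_{\substack{Y \leq p < Z \\ p \mid q}}\Bigl(1-\frac{1}{p}\Bigr)^{-1} \leq \exp\!\Bigl(\frac{2\,\omega_{[Y,Z)}(q)}{Y}\Bigr) = 1 + o(1), \]
where $\omega_{[Y,Z)}(q)$ denotes the number of prime divisors of $q$ in $[Y,Z)$. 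Indeed $\omega_{[Y,Z)}(q) \leq \log q/\log Y \ll \log X/\log Y$, while the standing hypothesis $\eta < (\log X)^{-C}$ for $C$ large forces $Y = \eta^{-1} > (\log X)^C$ and hence $Y \log Y \gg \log X$.

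Summing the main terms over $q \in \CQ$ yields $O((\log Y/\log Z)\, T)$, the desired bound. The sieve remainders contribute $\ll |\CQ| D \leq QX^{1/3}$, which is dominated by $(\log Y/\log Z)\,T$ thanks to the earlier reduction $|I_q| \geq X^{0.9}$: combined with $q \leq 2Q \leq X^{1/2}$ this gives $T \geq |\CQ|\cdot X^{0.4}$, while $\log Y/\log Z \gg (\log\log X)/\log X$, so $X^{1/3}$ is a negligible error. The main obstacle is controlling the correction factor for primes dividing $q$ uniformly across $\CQ$; it is precisely here that the quantitative largeness of $Y$ (that is, the smallness of $\eta$ compared to powers of $\log X$) is used in an essential way.
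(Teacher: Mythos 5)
Your proof is correct and follows essentially the same route as the paper: bound $|F(n)|$ trivially by \eqref{eq:F-trivial}, apply an upper-bound sieve to the set $\{n \in I_q : n \equiv a_q \pmod q\}$ sifting by primes in $[Y,Z)$, and observe that the correction coming from primes dividing $q$ is $1 + o(1)$ because $Y$ is much larger than $\log q$. The only cosmetic differences are your choice of sieve level $D = X^{1/3}$ (the paper uses $Z^2 \leq |I_q|/q$, which makes the remainder negligible without further discussion) and the way you package the prime-dividing-$q$ correction (an $\exp(2\omega_{[Y,Z)}(q)/Y)$ bound versus the paper's direct estimate of $d/\varphi(d)$ with $d = (q, \prod_{Y \leq p < Z} p)$); both rest on the same standing hypothesis $\eta < (\log X)^{-C}$.
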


\begin{proof}
Using~\eqref{eq:F-trivial}, we can bound the left hand side by
\[ \sum_{q \in \CQ} \sum_{\substack{n \in I_q \\ n \equiv a_q \pmod{q}}} \1_{(n,\prod_{Y \leq p < Z}p) = 1}. \]
Consider the inner sum for a fixed $q$. Writing $d = (q, \prod_{Y \leq p < Z} p)$, we may bound the inner sum using a standard upper bound sieve (since $Z^2 \leq |I_q|/q$) to obtain
\[ \sum_{\substack{n \in I_q \\ n \equiv a_q \pmod{q}}} \1_{(n,\prod_{Y \leq p < Z}p) = 1} \ll \frac{|I_q|}{q} \prod_{\substack{Y \leq p < Z \\ p\nmid q}} \left(1 - \frac{1}{p}\right) \ll \frac{|I_q|}{q} \cdot \frac{\log Y}{\log Z} \cdot \frac{d}{\varphi(d)}.  \]
On the other hand, since $d \mid \prod_{Y \leq p < Z} p$ and $d \leq q$ we have
\[ \frac{d}{\varphi(d)} \leq \prod_{Y \leq p < W} \left(1 - \frac{1}{p}\right)^{-1}, \]
where $W \sim Y + \log q$. Thus $d/\varphi(d) = O(1)$ since $Y \geq \log q$, and the conclusion of the lemma follows.
\end{proof}

Thus we can restrict to those $n$ with $\mu_{[Y,Z)}^2(n) = 1$ and having at least one prime divisor $p \in [Y,Z)$. By the Ramar{\'e} identity, we need to estimate
\[ \Sigma := \sum_{\substack{n \leq X \\ \mu_{[Y,Z)}^2(n) = 1}} f(n) F(n) \sum_{\substack{Y \leq p < Z \\ p\mid n}} w(n/p). \]
Writing $m = n/p$ and using the multiplicativity of $f$, we obtain
\begin{equation}\label{eq:Sigma} 
\Sigma = \sum_{\substack{m \leq X/Y \\ \mu_{[Y,Z)}^2(m) = 1}} w(m) f(m) \sum_{\substack{Y \leq p < Z \\ p \leq X/m \\ (m,p)=1}} f(p) F(pm). 
\end{equation}
The condition $\mu_{[Y,Z)}^2(m) = 1$ can be dropped since the contribution from those $m$ divisible by $\widetilde{p}^2$ for some $\widetilde{p} \in [Y,Z)$ is at most
\[ \sum_{Y \leq \widetilde{p} < Z} \sum_{Y \leq p < Z} \sum_{\substack{m \leq X/p \\ \widetilde{p}^2 \mid m}} |F(pm)| \ll T \sum_{Y \leq \widetilde{p} < Z} \sum_{Y \leq p < Z} \frac{1}{p\widetilde{p}^2}  \ll \frac{T}{Y} \log\log X, \]
by an application of Lemma~\ref{lem:ramare-l1bound}. Similarly, the condition $(m,p) = 1$ in~\eqref{eq:Sigma} can also be dropped since the contribution from the terms with $p \mid m$ is at most
\[ \sum_{Y \leq p < Z} \sum_{\substack{m \leq X/p \\ p\mid m}} |F(pm)| \ll T \sum_{Y \leq p < Z} \frac{1}{p^2} \ll \frac{T}{Y}, \]
by Lemma~\ref{lem:ramare-l1bound}. Both these bounds are acceptable. Thus it remains to bound
\[ \Sigma' := \sum_{m \leq X/Y} w(m) f(m) \sum_{\substack{Y \leq p < Z \\ p \leq X/m}} f(p) F(pm). \]
Dyadically dividing the range $[Y,Z)$ for $p$, we consider
\[ \Sigma'(P) := \sum_{m \leq X/P} w(m) f(m) \sum_{\substack{P \leq p < 2P \\ p \leq X/m}} f(p) F(pm) \]
for $P \in [Y, Z)$. Use the trivial bound $|w(m) f(m)| \leq 1$ and apply the Cauchy-Schwarz inequality to obtain
\[ |\Sigma'(P)|^2 \ll \frac{X}{P} \sum_{m \leq X/P} \left|\sum_{\substack{P \leq p < 2P \\ p \leq X/m}} f(p) F(pm) \right|^2. \]
After expanding the square and changing the order of summation, we obtain
\begin{align*} 
|\Sigma'(P)|^2 & \ll \frac{X}{P} \sum_{P \leq p,p' < 2P} f(p) \overline{f(p')} \sum_{m \leq \min(X/p, X/p')} F(pm) \overline{F(p'm)} \\
& \ll \frac{X}{P} \sum_{P \leq p,p' < 2P} \left| \sum_{m \leq \min(X/p, X/p')} F(pm) \overline{F(p'm)} \right|.
\end{align*}
Set $K = P$, $L = X/P$, and $\delta = \eta^c$ for some $c>0$ small enough depending on $d,s$. The following lemma, whose proof will be given in Section~\ref{sec:type-II}, gives the necessary estimates for the type-II (bilinear) sums appearing above.

\begin{lemma}\label{lem:typeII}
Let $K, L, Q \geq 2$ be parameters with $10Q^2 \leq L$. Let $\delta \in (0,1/2)$. Associated to each $Q \leq q < 2Q$ we have:
\begin{enumerate}
\item a residue class $a_q\pmod q$ with $0 \leq a_q < q, (a_q,q) = 1$, and an arbitrary interval $I_q$;
\item a nilmanifold $G_q/\Gamma_q$ of dimension at most some $d \geq 1$, equipped with a filtration $(G_q)_{\bullet}$ of degree at most some $s \geq 1$ and a $\delta^{-1}$-rational Mal'cev basis $\CX_q$;
\item a polynomial sequence $g_q: \Z \to G_q$ adapted to $(G_q)_{\bullet}$;
\item a Lipschitz function $\varphi_q: G_q/\Gamma_q \to \C$ with $\|\varphi_q\|_{\text{Lip}(\CX_q)} \leq 1$ and $\int\varphi_q = 0$.
\end{enumerate}
Let $\psi_q: \Z \to \C$ be the function defined by $\psi_q(n) = \varphi_q(g_q(n)\Gamma_q)$, and let $F$ be the function defined by
\[ F(n) = \sum_{\substack{Q \leq  q < 2Q \\ n \in I_q \\ n\equiv a_q \pmod{q}}} \psi_q((n-a_q)/q). \]
For each $k,k' \in [K, 2K)$, let $I(k,k') \subset [0,L]$ be an arbitrary interval. Suppose that
\begin{equation}\label{eq:large-typeII} 
\sum_{K \leq k, k' < 2K} \left| \sum_{\ell \in I(k,k')} F(k\ell) \overline{F(k'\ell)} \right| \geq \delta K^2 L,
\end{equation}
and that $K^{-c} < \delta < (\log Q)^{-1}$ for some sufficiently small $c = c(d,s) > 0$. Then the polynomial sequence $\{g_q(m)\}_{1 \leq m \leq KL/Q}$ fails to be totally $\delta^{O_{d,s}(1)}$-equidistributed for some $Q \leq q < 2Q$.
\end{lemma}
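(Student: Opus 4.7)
The plan is to convert the hypothesized bilinear correlation of $F$ into a failure of equidistribution for some $g_q$, via Cauchy-Schwarz to isolate a single modulus followed by the Green-Tao quantitative Leibman theorem.

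First I would introduce unimodular weights $c(k,k')$ to open the absolute values in~\eqref{eq:large-typeII}, then expand $F = \sum_q F_q$, where $F_q(n) = \psi_q((n-a_q)/q)\,\mathbf{1}[n \in I_q,\,n \equiv a_q \pmod q]$, in both factors. This reformulates the hypothesis as
\[ \sum_{q,q'} \sum_{k,k',\ell} c(k,k')\,F_q(k\ell)\,\overline{F_{q'}(k'\ell)} \geq \delta K^2 L, \]
with each summand supported on $(k,q) = (k',q') = 1$ and $\ell$ in an arithmetic progression modulo $\mathrm{lcm}(q,q')$.

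The heart of the proof is the reduction from two moduli $(q,q')$ to a single one. I would apply a Cauchy-Schwarz step (in the $k'$-variable, combined with pigeonhole in $(q,q')$) designed to promote the diagonal $q = q'$ as the main contribution, absorbing polynomial losses in $\delta^{O_{d,s}(1)}$ under the hypothesis $\delta > (\log Q)^{-1}$. After the change of variables $\ell = qm + a_q k^{-1}$ (with $k^{-1}$ the inverse of $k$ mod $q$), the identity $(k\ell - a_q)/q = km + \alpha_k$ holds for an explicit integer shift $\alpha_k = (k \cdot (a_q k^{-1} \bmod q) - a_q)/q$, and the surviving sum takes the form
\[ \sum_{\substack{k \equiv k' \pmod q \\ (kk',q)=1}} c'(k,k') \sum_m \varphi_q\bigl(g_q(km+\alpha_k)\Gamma_q\bigr)\,\overline{\varphi_q\bigl(g_q(k'm+\alpha_{k'})\Gamma_q\bigr)} \gg \delta^{O_{d,s}(1)} \frac{K^2 L}{Q^2}, \]
for a single $q \in [Q, 2Q)$. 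The inner sum is a correlation on the product nilmanifold $(G_q/\Gamma_q)^2$, with Lipschitz test function $\Phi(x,y) = \varphi_q(x)\overline{\varphi_q(y)}$ of mean zero (using $\int \varphi_q = 0$), evaluated along the polynomial sequence $m \mapsto (g_q(km+\alpha_k), g_q(k'm+\alpha_{k'}))$. Invoking the Green-Tao quantitative Leibman theorem~\cite{GT-nilsequence} on this product nilmanifold yields non-equidistribution of the joint sequence at scale $\delta^{O_{d,s}(1)}$, and a projection to horizontal characters on $G_q$ (via a further Cauchy-Schwarz in $k$ if needed to extract a single-variable sequence) produces the required horizontal character witnessing that $g_q$ itself is not totally $\delta^{O_{d,s}(1)}$-equidistributed on $[1, KL/Q]$.

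The principal obstacle is the two-modulus to one-modulus reduction. A naive trivial bound on the off-diagonal ($q \neq q'$) contribution is already of order $\delta K^2 L$, so genuine cancellation must be extracted rather than merely pigeonholed away. The key features making this possible are that distinct $q,q' \in [Q,2Q)$ impose almost independent residue conditions, restricting $\ell$ to $O(L\gcd(q,q')/Q^2)$ values, and that an appropriate Cauchy-Schwarz produces a dual bilinear form in which the off-diagonal contribution becomes manageably small after symmetrization. Once a single modulus has been isolated, the final appeal to Green-Tao equidistribution follows the familiar template of the Möbius-nilsequence paper.
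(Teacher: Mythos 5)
Your overall template (expand the bilinear sum, change variables so the inner sum becomes a nilsequence correlation, apply the quantitative Leibman theorem, then translate the resulting horizontal-character bound into non-equidistribution of some single $g_q$) matches the paper's strategy in broad outline, but the central step --- the passage from pairs $(q,q')$ to a single modulus --- is where your plan diverges from the paper and where it has a genuine gap.

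You propose to use Cauchy--Schwarz plus a pigeonhole to ``promote the diagonal $q=q'$ as the main contribution.'' This cannot work as stated. When $q=q'$, the two congruences $k\ell\equiv a_q$ and $k'\ell\equiv a_q\pmod q$ force $k\equiv k'\pmod q$, so the diagonal contributes at most $\sum_q K(1+K/q)\cdot O(L/Q)\ll KL + K^2L/Q$, which under the stated hypotheses is negligible (or at best comparable) relative to the assumed bound $\delta K^2L$; the bulk of the sum lives on $q\neq q'$, as you yourself note in your closing paragraph. You acknowledge that ``genuine cancellation must be extracted rather than merely pigeonholed away,'' but you never say how, and there is no Cauchy--Schwarz symmetrization that makes the off-diagonal disappear. (Also, you write ``under the hypothesis $\delta > (\log Q)^{-1}$,'' but the hypothesis is $\delta < (\log Q)^{-1}$ --- it is a loss of $\log Q$ that is absorbed into $\delta^{-O(1)}$, not a gain.)

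The paper does \emph{not} reduce to the diagonal. It keeps both moduli, writes the congruence conditions as a single condition modulo $[q,q']$, changes variables $\ell = [q,q']m+a$, and studies the polynomial sequence $m\mapsto\bigl(g_q(\tfrac{k[q,q']}{q}m+b),\,g_{q'}(\tfrac{k'[q,q']}{q'}m+b')\bigr)$ on the genuine product $G_q\times G_{q'}$. This introduces a subtlety you do not address: the coefficient $[q,q']/q$ is not $1$ in general (it can be as large as $Q$), so the diophantine bound that comes out of Leibman's theorem concerns $\alpha_i\cdot\bigl(\tfrac{[q,q']}{q}\bigr)^i$ rather than $\alpha_i$, and one must remove this factor before concluding anything about $g_q$ on $[1,KL/Q]$. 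The paper does this by a delicate downward induction on the degree $i$ using the splitting $\chi_{q,q'}=(\chi_{q,q'}^{(1)},\chi_{q,q'}^{(2)})$: one pigeonholes so that a fixed $q$ is paired with many $\widetilde q'$ carrying the same $\chi^{(1)}$, shows (via Lemma~\ref{lem:typical-q}) that the resulting values $[q,\widetilde q']/q$ are spread over a set of size $\gg\delta^{O(1)}R/Q$, and then applies a recurrence result (\cite[Lemma 4.5]{GT-nilsequence}) to strip the dilation. This is the crux of the proof and is entirely absent from your outline; without it your plan does not close.
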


To complete the proof of Proposition~\ref{prop:BV-equidist}, note that the hypotheses $10Q^2 \leq L$ and $K^{-c} < \delta < (\log Q)^{-1}$ in Lemma~\ref{lem:typeII} are satisfied by our choices of $Y$ and $Z$. By setting $\varphi_q = 0$ for $q \notin \CQ$ and renormalizing (replacing $\varphi_q$ by $\varphi_q/\|\varphi_q\|_{\text{Lip}(\CX_q)}$), we may apply Lemma~\ref{lem:typeII} to conclude that
\[ |\Sigma'(P)|^2 \ll \frac{X}{P} \cdot \eta^c PX \max_{q \in \CQ} \|\varphi_q\|_{\text{Lip}(\CX_q)}^2. \]
The desired bound~\eqref{eq:f-F-orth} follows after summing over $P$ dyadically.

\begin{remark}
Instead of using simply the trivial bound $|w(n)| \leq 1$, one may appeal to~\cite[Lemma 2.1]{Gre16} to dispose of the extra $\log X$ factor that appeared when summing over $P$ dyadically. We will, however, not bother with this since the type-II estimates we use already have an extra logarithmic factor anyways.
\end{remark}

\section{Type-II estimates}\label{sec:type-II}

In this section we prove Lemma~\ref{lem:typeII}. We start with the following lemma, needed to treat composite moduli.

\begin{lemma}\label{lem:typical-q}
Let $Q \geq 2$ and $Q \leq R \leq 4Q^2$. Let $E$ be the set of pairs $(q,q')$ with $Q \leq q,q' < 2Q$ and $R \leq [q,q'] < 2R$. For each $Q \leq q < 2Q$ and $R \leq r < 2R$, let
\[ m_q(r) = \#\{Q \leq q' < 2Q: (q,q') \in E, [q,q'] = r\}. \]
Then for any $m_0 \geq 1$ we have
\[ \#\{(q,q') \in E: m_q([q,q']) \geq m_0\} \ll m_0^{-1}R\log Q. \]
\end{lemma}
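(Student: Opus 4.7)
The plan is to apply Markov's inequality to reduce the count to a count of ordered triples with a common lcm, and then to estimate this count by a clean parametrization in terms of gcds and cofactors.

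First, I would observe that $\mathbf{1}[m_q([q, q']) \ge m_0] \le m_q([q, q'])/m_0$, so summing over $(q, q') \in E$ yields
\[
\#\{(q, q') \in E : m_q([q, q']) \ge m_0\} \le \frac{T}{m_0}, \qquad T := \sum_{(q, q') \in E} m_q([q, q']),
\]
and $T$ is exactly the number of ordered triples $(q, q', q'') \in [Q, 2Q)^3$ with $[q, q'] = [q, q''] \in [R, 2R)$. Hence it suffices to prove $T \ll R \log Q$.

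Next I would parametrize each triple by setting $b = [q, q']/q$, $d = (q, q')$, $d' = (q, q'')$, so that $q' = bd$ and $q'' = bd'$. The constraints become $d, d' \mid q$, $(b, q/d) = (b, q/d') = 1$, $q, bd, bd' \in [Q, 2Q)$, and $qb \in [R, 2R)$. Opening the square in the definition of $T$ and swapping the order of summation so that $(b, d, d')$ is fixed and $q$ is counted last, the coprimality conditions force the $b$-smooth parts of $q$, $d$, $d'$ to coincide; call this common value $A$ and write $d = Ae$, $d' = Ae'$ with $(ee', b) = 1$. Then $q = A[e, e']k$ with $(k, b) = 1$, and $k$ is confined to an interval of length $\ll \min(Q, R/b)/(A[e, e'])$.

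The summation should then proceed in stages: a standard sieve gives at most $(\phi(b)/b) \min(Q, R/b)/(A[e, e'])$ choices of $k$; the standard estimate $\sum_{e, e' \in (N, 2N]} 1/[e, e'] \ll \log N$ (applied with $N = Q/(Ab)$) contributes a factor of $\log Q$; the sum over $A$ whose prime factors all divide $b$ satisfies $\sum_A 1/A \ll b/\phi(b)$, exactly cancelling the Euler factor $\phi(b)/b$; and finally $\sum_b \min(Q, R/b) \ll R$ (by splitting at $b = R/Q$) delivers $T \ll R \log Q$.

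The hard part will be the bookkeeping needed to achieve this precise cancellation between $\phi(b)/b$ (arising from $(k, b) = 1$) and $b/\phi(b)$ (arising from the sum over $b$-smooth $A$); any slackness here costs a factor of $\log \log Q$ and misses the target bound. Controlling the sieve error in the count of $k$ coprime to $b$ is also needed, but contributes only lower-order terms that are absorbed easily.
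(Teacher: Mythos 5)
Your top-level strategy is genuinely different from the paper's, and the Markov reduction is sound: the bound $T\ll R\log Q$ for the number of triples $(q,q',q'')$ with $[q,q']=[q,q'']\in[R,2R)$ is true and does imply the lemma. For comparison, the paper keeps the threshold $m_0$, observes that for fixed $r$ the map $q'\mapsto (q,q')=qq'/r$ is injective, so $m_q(r)\leq\sigma_D(q)$ with $D=Q^2/2R$ (divisors of $q$ in a dyadic window), and then applies Chebyshev to the second moment $\sum_q\sigma_D(q)^2\ll Q\log Q$. Both arguments bottom out in the same estimate $\sum_{d,d'}1/[d,d']\ll\log(\cdot)$, but the paper's route needs no sieving and no gcd bookkeeping. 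One point you should make explicit in your route: the final sum $\sum_b\min(Q,R/b)$ is $\ll R$ only because $b=[q,q']/q$ is automatically confined to the dyadic window $[R/2Q,\,2R/Q)$; if $b$ were allowed to range up to $2Q$, the tail $\sum_{b>R/Q}R/b$ would contribute $R\log Q$, which combined with your other $\log Q$ would give $R(\log Q)^2$.

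The genuine soft spot is the step you dismiss as easily absorbed. Since $[e,e']$ is typically of size $ee'\asymp (Q/Ab)^2$, the interval for $k$ typically has length $\asymp AR^2/Q^3$, which is less than $1$ whenever $R$ is much below $Q^{3/2}$; so the main term $(\phi(b)/b)\cdot(\text{length})$ does not majorize the count and you must carry an additive error. The naive Legendre error $O(2^{\omega(b)})$ per tuple $(b,A,e,e')$, summed over the $\asymp(Q/Ab)^2$ pairs $(e,e')$, totals about $Q^3\log Q/R$, which exceeds $R\log Q$ for all $R\ll Q^{3/2}$. To rescue this you must first discard tuples admitting no $k$ at all, i.e.\ restrict to $A[e,e']\ll Q$, and then bound the number of pairs $e,e'\asymp Q/(Ab)$ with $[e,e']\leq 2Q/A$ by $O(Q/A)$ --- which is again the lcm estimate in disguise --- and even then one must be careful that the accumulated $2^{\omega(b)}$ and $b/\phi(b)$ factors do not reinstate a $\log\log Q$. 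By contrast, the cancellation you flag as the hard part, between the sieve density $\phi(b)/b$ and $\sum_A 1/A=\prod_{p\mid b}(1-1/p)^{-1}=b/\phi(b)$, is exact and immediate. So the proposal is salvageable, but as written the error analysis has a real gap, and the difficulty is located in the wrong place; the paper's divisor-function argument sidesteps all of it.
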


\begin{proof}
By a dyadic division, it suffices to show that
\[ \#\{(q,q') \in E: m_0 \leq m_q([q,q']) < 2m_0\} \ll m_0^{-1}R\log Q \]
for any $m_0 \geq 1$. Call the left hand side above $N(m_0)$. For any $D \geq 1$, let $\sigma_D(q)$ be the number of divisors of $q$ in the range $[D, 8D]$. A moment's thought reveals that $m_q(r) \leq \sigma_D(q)$ where $D = Q^2/2R$. Indeed, each $q'$ with $[q,q'] = r$ gives rise to a divisor $(q,q')$ of $q$ in the range $[D, 8D]$, and moreover $(q,q')$ is uniquely determined by $q'$ via $(q,q') = qq'/r$. It follows that
\[ N(m_0) = \sum_{\substack{Q \leq q < 2Q \\ \sigma_D(q) \geq m_0}} \sum_{\substack{R \leq r < 2R \\ m_0 \leq m_q(r) < 2m_0}} m_q(r).  \]
Since $m_q(r) = 0$ unless $q \mid r$, the inner sum over $r$ is $O(m_0R/Q)$. It thus suffices to show that
\[ \#\{Q \leq q < 2Q: \sigma_D(q) \geq m_0\} \ll m_0^{-2}Q\log Q \]
for any $D \geq 1$. We may assume that $D \leq Q^{1/2}$ since otherwise we may replace $D$ by $Q/8D$. By the second moment method, we have
\[ \#\{Q \leq q < 2Q: \sigma_D(q) \geq m_0\} \leq \frac{1}{m_0^2} \sum_{Q \leq q < 2Q} \sigma_D(q)^2. \]
After expanding out the square and changing the order of summation, the right hand side above is
\[ \frac{1}{m_0^2} \sum_{D \leq d_1, d_2 \leq 8D} \sum_{\substack{Q \leq q < 2Q \\ [d_1,d_2] \mid q}} 1 \ll \frac{Q}{m_0^2} \sum_{D \leq d_1, d_2 \leq 8D} \frac{1}{[d_1,d_2]} \ll \frac{Q}{m_0^2} \log D. \]
This completes the proof of the lemma.
\end{proof}


There are two places in the proof of Lemma~\ref{lem:typeII} where we lose a factor of $\log Q$ (and hence the assumption that $\delta < (\log Q)^{-1}$). One place is from dyadically decomposing the possible values of $[q,q']$, and the other from the conclusion of Lemma~\ref{lem:typical-q}. If one is only interested in prime moduli, then this extra loss can certainly be saved.

\begin{proof}[Proof of Lemma~\ref{lem:typeII}]
In this proof, all implied constants are allowed to depend on $d,s$. For $k,k' \in [K, 2K)$, we may write
\begin{equation}\label{eq:typeII-proof1} 
\sum_{\ell \in I(k,k')} F(k\ell) \overline{F(k'\ell)}  = \sum_{Q \leq q,q' < 2Q} \sum_{\substack{\ell \in I(k,k',q,q') \\ k\ell \equiv a_q\pmod{q} \\ k'\ell \equiv a_{q'} \pmod{q'}}} \psi_q((k\ell-a_q)/q) \overline{\psi_{q'}((k'\ell - a_{q'})/q')}, 
\end{equation}
for some interval $I(k,k',q,q') \subset I(k,k')$. The solution to the simultaneous congruence conditions
\[ k\ell \equiv a_q\pmod{q}, \ \ k'\ell \equiv a_{q'} \pmod{q'} \]
takes the form
\[ \ell \equiv a(k,k',q,q') \pmod{[q,q']}, \]
for some $0 \leq a(k,k',q,q') < [q,q']$. It is possible that no solutions exist, in which case we may simply set $I(k,k',q,q')$ to be empty and assign an arbitrary value to $a(k,k',q,q')$. After a change of variables $\ell = [q,q']m + a(k,k',q,q')$, the inner sum over $\ell$ in~\eqref{eq:typeII-proof1} can be rewritten as
\[ \sum_{m \in J(k,k',q,q')} \psi_q\left(\frac{k[q,q']}{q}m + b \right) \overline{\psi_{q'}\left(\frac{k'[q,q']}{q'}m + b' \right)}, \]
for some interval $J(k,k',q,q') \subset [0, L/[q,q']]$, where
\[ b = \frac{1}{q}(k a(k,k',q,q') - a_q), \ \ b' = \frac{1}{q'}(k' a(k,k',q,q') - a_{q'}). \]
In principle $b,b'$ depend on $k,k',q,q'$, but to simplify notations we drop this dependence, as the precise nature of $b,b'$ is unimportant, apart from the obvious facts that $0 \leq b \leq k[q,q']/q$ and $0 \leq b' \leq k'[q,q']/q'$. Consider the polynomial sequence $g_{k,k',q,q'}: \Z \to G_q\times G_{q'}$ defined by
\[ g_{k,k',q,q'}(m) = \left(g_q\left(\frac{k[q,q']}{q}m + b\right), g_{q'}\left(\frac{k'[q,q']}{q'}m + b' \right) \right), \]
and the Lipschitz function $\varphi_{q,q'}: G_q/\Gamma_q \times G_{q'}/\Gamma_{q'} \to \C$ defined by
\[ \varphi_{q,q'}(x, x') = \varphi_q(x) \overline{\varphi_{q'}(x')}. \]
Then the type-II sum from~\eqref{eq:typeII-proof1} can be written as
\[ \sum_{\ell \in I(k,k')} F(k\ell) \overline{F(k'\ell)}  = \sum_{Q \leq q,q' < 2Q} \sum_{m \in J(k,k',q,q')} \varphi_{q,q'}(g_{k,k',q,q'}(m)). \]
After dyadically dividing the possible values of $[q,q']$, we deduce from the hypothesis~\eqref{eq:large-typeII} that
\begin{equation}\label{eq:large-sum-R} 
\sum_{K \leq k,k' < 2K} \sum_{\substack{Q \leq q,q' < 2Q \\ R \leq [q,q'] < 2R}} \left| \sum_{m \in J(k,k',q,q')} \varphi_{q,q'}(g_{k,k',q,q'}(m)) \right| \gg \delta^2 K^2 L,
\end{equation}
for some $Q \leq R \leq 4Q^2$, where we used the assumption that $\delta < (\log Q)^{-1}$. For the rest of the proof fix such a $R$. Hence there is a subset $T$ consisting of quadruples $(k,k',q,q')$ with $R \leq [q,q'] < 2R$, such that
\begin{equation}\label{eq:typeII-T} 
|T| \gg \delta^{O(1)} K^2R, 
\end{equation}
and for $(k,k',q,q') \in T$ we have
\begin{equation}\label{eq:pp'qq'} 
\left| \sum_{m \in J(k,k',q,q')} \varphi_{q,q'}(g_{k,k',q,q'}(m)) \right| \gg \frac{\delta^2 L}{R}. 
\end{equation}
Since $\int \varphi_{q,q'} = 0$, the inequality~\eqref{eq:pp'qq'} implies that the sequence $\{g_{k,k',q,q'}(m)\}_{0 \leq m \leq L/R}$ fails to be $\delta^{O(1)}$-equidistributed. Hence by \cite[Theorem 2.9]{GT-nilsequence}, there is a nontrivial horizontal character $\chi_{q,q'} = \chi_{k,k',q,q'}: G_q\times G_{q'} \to \C$ with $\|\chi_{q,q'}\| \ll \delta^{-O(1)}$, such that
\begin{equation}\label{eq:chiqq'} 
\|\chi_{q,q'} \circ g_{k,k',q,q'}\|_{C^{\infty}(L/R)} \ll \delta^{-O(1)}.
\end{equation}
We have tacitly assumed that $\chi_{q,q'}$ is independent of $k,k'$, since this can be achieved after pigeonholing in the $\delta^{-O(1)}$ possible choices of $\chi_{q,q'}$ and enlarging the constant $O(1)$ in~\eqref{eq:typeII-T} appropriately. More explicitly, if we write
\begin{equation}\label{eq:betai-kq} 
\chi_{q,q'} \circ g_{k,k',q,q'}(m) = \sum_{i=0}^s \beta_i(k,k',q,q') m^i
\end{equation}
for some coefficients $\beta_i(k,k',q,q') \in \R$, then~\eqref{eq:chiqq'} combined with~\cite[Lemma 3.2]{GT-mobius-nil} implies that there is a positive integer $r = O(1)$ such that
\begin{equation}\label{eq:betai} 
\|r \beta_i(k,k',q,q')\| \ll \delta^{-O(1)} (L/R)^{-i} 
\end{equation}
for each $1 \leq i \leq s$ and $(k,k',q,q') \in T$.  Write $\chi_{q,q'} = (\chi_{q,q'}^{(1)}, \chi_{q,q'}^{(2)})$, where $\chi_{q,q'}^{(1)},\chi_{q,q'}^{(2)}$ are horizontal characters on $G_q,G_{q'}$, respectively, with $\|\chi_{q,q'}^{(1)}\| \ll \delta^{-O(1)}$ and $\|\chi_{q,q'}^{(2)}\| \ll \delta^{-O(1)}$. Write also
\[
\chi_{q,q'}^{(1)} \circ g_q(n) = \sum_{i=0}^s \alpha_i(q,q') n^i, \ \ \chi_{q,q'}^{(2)} \circ g_{q'}(n) = \sum_{i=0}^s \alpha_i'(q,q') n^i, \]
for some coefficients $\alpha_i(q,q'), \alpha_i'(q,q') \in \R$. 

\begin{claim}
There exists a sequence of subsets $E_1 \subset \cdots \subset E_s$ of pairs $(q,q')$ with $R \leq [q,q'] < 2R$ and a sequence of positive integers $r_1 \geq \cdots \geq r_s$ with $r_{i+1} \mid r_i$ for each $i$, such that $|E_1| \gg \delta^{O(1)}R$, $r_1 \ll \delta^{-O(1)}$, and moreover
\[ \|r_i \alpha_i(q,q')\| \ll \delta^{-O(1)} (KL/Q)^{-i}, \ \ \|r_i \alpha_i'(q,q')\| \ll \delta^{-O(1)} (KL/Q)^{-i}, \]
for each $1 \leq i \leq s$ and $(q,q') \in E_i$ .
\end{claim}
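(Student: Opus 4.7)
I will prove the claim by downward induction on $i$, from $s$ down to $1$. The starting observation is the binomial expansion
\begin{equation*}
\beta_i(k,k',q,q') = \bigl(\tfrac{k[q,q']}{q}\bigr)^{i} \sum_{j \ge i}\binom{j}{i}\alpha_j(q,q')\,b^{j-i} + \bigl(\tfrac{k'[q,q']}{q'}\bigr)^{i} \sum_{j \ge i}\binom{j}{i}\alpha_j'(q,q')\,(b')^{j-i},
\end{equation*}
obtained by substituting $n = k[q,q']m/q + b$ into $\chi_{q,q'}^{(1)} \circ g_q(n) = \sum_j \alpha_j(q,q')\, n^j$ (and analogously for the second factor) and reading off the coefficient of $m^i$. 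For $i = s$ this collapses to $\beta_s = \alpha_s(q,q')(k[q,q']/q)^s + \alpha_s'(q,q')(k'[q,q']/q')^s$, with no $b, b'$ dependence, giving a clean base case.

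For the inductive step from $i+1$ down to $i$, I would use the inductive bounds on $\|r_{i+1}\alpha_j\|, \|r_{i+1}\alpha_j'\|$ for $j > i$ (on the set $E_{i+1}$), together with the integrality of $b, b'$ and the trivial bounds $|b|, |b'| \ll KR/Q$. Since $R \le L$ (from $10Q^2 \le L$ and $R \le 4Q^2$), the $j > i$ contributions to $r_{i+1}\beta_i$ are bounded modulo $1$ by $\delta^{-O(1)} (R/L)^j \le \delta^{-O(1)}(R/L)^i$, precisely the size of the error $\delta^{-O(1)}(L/R)^{-i}$ afforded by~\eqref{eq:betai}. Subtracting off these contributions yields the clean inequality
\begin{equation*}
\bigl\|r_{i+1}\alpha_i(q,q')(k[q,q']/q)^i + r_{i+1}\alpha_i'(q,q')(k'[q,q']/q')^i\bigr\| \ll \delta^{-O(1)}(L/R)^{-i}
\end{equation*}
for all $(k,k',q,q') \in T$ with $(q,q') \in E_{i+1}$. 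After pigeonholing over $(q,q')$ and then over a popular $k'$, the left-hand side becomes a polynomial of degree $i$ in $k$ with leading coefficient $r_{i+1}\alpha_i(q,q')([q,q']/q)^i$, close to an integer on a $\delta^{O(1)}$-dense subset of $k \in [K, 2K)$. The quantitative one-variable equidistribution lemma~\cite[Lemma 3.2]{GT-mobius-nil} then produces a small integer $r_i' \ll \delta^{-O(1)}$ with $\|r_i' r_{i+1}\alpha_i(q,q') ([q,q']/q)^i\| \ll \delta^{-O(1)}(KL/R)^{-i}$, and the symmetric bound for $\alpha_i'$.

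The main obstacle, which is the very reason Lemma~\ref{lem:typical-q} is proved in advance, is passing from this bound on $\alpha_i\cdot([q,q']/q)^i$ at scale $(KL/R)^{-i}$ to the desired bound on $\alpha_i$ itself at scale $(KL/Q)^{-i}$. Since $[q,q']/q$ can be as large as $R/Q$, and integer multiplication does not reduce the $\|\cdot\|$-norm, a naive division loses exactly the available margin. To overcome this, I would invoke Lemma~\ref{lem:typical-q} to restrict $E_i$ to pairs $(q,q')$ whose multiplicity $m_q([q,q'])$ is at most $\delta^{-O(1)}$, costing only a factor $\log Q$ which is harmlessly absorbed into $\delta^{-O(1)}$. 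For each fixed $q$ in the restricted set, combining the bounds arising from the several $q'$ sharing a given least common multiple should, via a standard additive-combinatorial argument exploiting the different values of $[q,q']/q$, isolate $\|r_i\alpha_i(q,q')\|$ at the correct scale, with the analogous procedure yielding the companion bound for $\alpha_i'$. Setting $r_i$ as the running LCM of all small multipliers encountered across the $s$ downward stages preserves both $r_{i+1}\mid r_i$ and $r_1 \ll \delta^{-O(1)}$, while successive pigeonhole passes shrink $E_i$ by only a $\delta^{O(1)}$ factor so that $|E_1| \gg \delta^{O(1)} R$ is retained.
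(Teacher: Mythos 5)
Your overall strategy — downward induction on $i$, peeling off the $j>i$ terms using integrality of $b,b'$ and the inductive bounds, extracting a diophantine bound on $\alpha_i(q,q')\bigl([q,q']/q\bigr)^i$ by varying $k$ with $k'$ fixed, and then dividing out the factor $([q,q']/q)^i$ by exploiting that $[q,q']/q$ takes $\gg\delta^{O(1)}R/Q$ distinct values once high-multiplicity pairs are removed via Lemma~\ref{lem:typical-q} — is the same as the paper's, and the first half of your argument is essentially correct.

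There is, however, a genuine gap in the final step. To ``combine the bounds arising from the several $q'$'' for a fixed $q$, you need the quantity $\alpha_i(q,q')$ to be \emph{the same} as $q'$ ranges over the set you are combining over. A priori it is not: each pair $(q,q')$ produces its own horizontal character $\chi_{q,q'}^{(1)}$ on $G_q$, and hence its own coefficients $\alpha_i(q,q')$. Without fixing the character, the bounds
\[ \bigl\|\widetilde{r}\,r_{i+1}\,\alpha_i(q,q')\bigl([q,q']/q\bigr)^i\bigr\| \ll \delta^{-O(1)}(KL/R)^{-i} \]
for different $q'$ concern \emph{different} real numbers multiplied by different integers, and no recurrence lemma applies. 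The paper handles this by an additional pigeonhole: since there are only $\delta^{-O(1)}$ horizontal characters of norm $\ll\delta^{-O(1)}$ on each $G_q$, one may pass to a dense subset of ``typical'' pairs $(q,q')$ for which $\chi_{q,q'}^{(1)}$ (and symmetrically $\chi_{q,q'}^{(2)}$) is constant as $q'$ (resp. $q$) varies over $\gg\delta^{O(1)}R/Q$ companions. Only then do the different $[q,\widetilde{q}']/q$ multiply a \emph{common} $\alpha_i$, and the recurrence lemma \cite[Lemma 4.5]{GT-nilsequence} can strip the factor. Your proposal omits this step entirely.

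Separately, your phrase ``the several $q'$ sharing a given least common multiple'' has it backwards: $q'$ sharing a common $[q,q']$ produce the \emph{same} value of $[q,q']/q$ and hence redundant information. The purpose of Lemma~\ref{lem:typical-q} is the opposite — to guarantee that each value of $[q,q']$ is attained only $\delta^{-O(1)}$ times, so that the surviving $q'$ (with a common character, as above) yield $\gg\delta^{O(1)}R/Q$ \emph{distinct} integers $[q,q']/q$ to feed into the recurrence argument.
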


Note that the claim actually implies the bounds
\[ \|r_i \alpha_j(q,q')\| \ll \delta^{-O(1)} (KL/Q)^{-j}, \ \ \|r_i \alpha_j'(q,q')\| \ll \delta^{-O(1)} (KL/Q)^{-j}, \]
for each $1 \leq i \leq j \leq s$ and $(q,q') \in E_j$.

Assuming the claim, we may conclude the proof of the lemma as follows. Pick an arbitrary pair $(q,q') \in E_1$. Since $\chi_{q,q'}$ is nontrivial, either $\chi_{q,q'}^{(1)}$ or $\chi_{q,q'}^{(2)}$ is nontrivial. Without loss of generality, assume that $\chi_{q,q'}^{(1)}$ is nontrivial. The diophantine information about $\alpha_i(q,q')$ from the claim implies that
\[ \|r_1 \chi_{q,q'}^{(1)} \circ g_q\|_{C^{\infty}(KL/Q)} \ll \delta^{-O(1)}. \]
Thus by~\cite[Lemma 5.3]{FH}, the polynomial sequence $g_q$ fails to be totally $\delta^{O(1)}$-equidistributed. 

It remains to establish the claim. Start by finding a subset $E$ of pairs $(q,q')$ with $R \leq [q,q'] < 2R$, such that the following properties hold:
\begin{enumerate}
\item $|E| \gg \delta^{O(1)}R$;
\item  for each pair $(q,q') \in E$, there are at least $\delta^{O(1)}K^2$ pairs $(k,k')$ with $(k,k',q,q') \in T$;
\item  for each pair $(q,q') \in E$, there are at most $\delta^{-O(1)}$ pairs $(q,\widetilde{q}') \in E$ with $[q,q'] = [q,\widetilde{q}']$, and similarly there are at most $\delta^{-O(1)}$ pairs $(\widetilde{q}, q') \in E$ with $[q,q'] = [\widetilde{q}, q']$.
\end{enumerate}
Indeed, from the bound~\eqref{eq:typeII-T} we may first find $E$ satisfying (1) and (2), and then apply Lemma~\ref{lem:typical-q} with $m_0 = \delta^{-C}$ for some sufficiently large $C$ to remove a small number of pairs from $E$, so that property (3) is satisfied.

Construct $\{E_i\}$ and $\{r_i\}$ in the claim by downward induction on $i$ as follows. Take $E_{s+1} = E$ just constructed and $r_{s+1} = r$ from~\eqref{eq:betai}. Now let $1 \leq i \leq s$, and suppose that $E_j, r_j$ have already been constructed for $j > i$ satisfying the desired properties. First we show that for each pair $(q,q') \in E_{i+1}$, there is a positive integer $\widetilde{r}(q,q') \ll \delta^{-O(1)}$ such that
\begin{equation}\label{eq:alpha-diophantine-1} 
\left\| \widetilde{r}(q,q') r_{i+1} \alpha_i(q,q') \left(\frac{[q,q']}{q}\right)^i \right\| \ll \delta^{-O(1)} (KL/R)^{-i}, 
\end{equation}
and similarly  with $\alpha_i(q,q')$ replaced by $\alpha_i'(q,q')$. To prove this, fix a pair $(q,q')\in E_{i+1}$, and for the purpose of simplifying notations we drop the dependence on $q,q'$ so that
\[ \alpha_i = \alpha_i(q,q'), \ \ \alpha_i' = \alpha_i'(q,q'), \ \ \beta_i(k,k') = \beta_i(k,k',q,q'). \]
From the definition of $g_{k,k',q,q'}$ we see the following relationship between the coefficients $\alpha_i, \alpha_i', \beta_i(k,k')$:
\begin{equation}\label{eq:beta-alpha-relation}
\beta_i(k,k') = \left(\frac{k[q,q']}{q}\right)^i \sum_{i \leq j \leq s} \alpha_j  b^{j-i} + \left(\frac{k'[q,q']}{q'}\right)^i  \sum_{i \leq j \leq s} \alpha_j' b'^{j-i}. 
\end{equation}
Write $\widetilde{\beta}_i(k,k')$ for the contribution from the term with $j=i$:
\[ \widetilde{\beta}_i(k,k') =  \left(\frac{k[q,q']}{q}\right)^i \alpha_i + \left(\frac{k'[q,q']}{q'}\right)^i \alpha_i'. \]
By the induction hypothesis, $\|r_{i+1}\alpha_j\|$ and $\|r_{i+1}\alpha_j'\|$ are small for $j > i$. Combined with the bound $0 \leq b,b' \ll KR/Q$, this implies that the terms with $j>i$ are negligible:
\[ \left\|r_{i+1} (\beta_i(k,k') - \widetilde{\beta}_i(k,k')) \right\| \ll \delta^{-O(1)}(L/R)^{-i}. \] 
It follows from~\eqref{eq:betai} that
\begin{equation}\label{eq:coeffi}  
\left\|r_{i+1} \widetilde{\beta}_i(k,k') \right\| \ll \delta^{-O(1)}(L/R)^{-i},
\end{equation}
whenever $(k,k',q,q') \in T$. Since $(q,q') \in E$, this holds for at least $\delta^{O(1)} K^2$ pairs $(k,k')$. Choose $k'$ such that~\eqref{eq:coeffi} holds whenever $k \in \CK$, for some subset $\CK$ with $|\CK| \gg \delta^{O(1)}K$. Since
\[ \widetilde{\beta}_i(k,k') - \widetilde{\beta}_i(\widetilde{k},k') = \alpha_i \left(\frac{[q,q']}{q}\right)^i (k^i - \widetilde{k}^i), \]
it follows that for $k,\widetilde{k} \in \CK$ we have
\[ \left\| r_{i+1} \alpha_i \left(\frac{[q,q']}{q}\right)^i (k^i - \widetilde{k}^i) \right\| \ll \delta^{-O(1)} (L/R)^{-i}.  \]
Since $\delta > K^{-c}$ for some sufficiently small $c>0$, the desired inequality~\eqref{eq:alpha-diophantine-1} follows from a standard recurrence result such as~\cite[Lemma 4.5]{GT-nilsequence}. The analogous bound for $\alpha_i'$ can be proved in a similar way. 

Now that we have established~\eqref{eq:alpha-diophantine-1}, define $\widetilde{E}_i \subset E_{i+1}$ to be a subset with $|\widetilde{E}_i| \gg \delta^{O(1)} |E_{i+1}| \gg \delta^{O(1)}R$, such that $\widetilde{r}(q,q')$ for $(q,q') \in \widetilde{E}_i$ take a common value $\widetilde{r}$. We say that a pair $(q,q') \in \widetilde{E}_i$ is \emph{typical}, if there are at least $\delta^{O(1)}R/Q$ pairs $(q,\widetilde{q}') \in \widetilde{E}_i$ with $\chi_{q,q'}^{(1)} = \chi_{q,\widetilde{q}'}^{(1)}$, and similarly there are at least $\delta^{O(1)}R/Q$ pairs $(\widetilde{q},q') \in \widetilde{E}_i$ with $\chi_{q,q'}^{(2)} = \chi_{\widetilde{q},q'}^{(2)}$.  Define $E_i \subset \widetilde{E}_i$ to be the set of typical pairs in $\widetilde{E}_i$. By choosing the constant $O(1)$ in the definition of typical pairs sufficiently large, we may ensure that $|E_i| \gg \delta^{O(1)}R$.

Now let $(q,q') \in E_i$. Since $(q,q')$ is typical, there exists a subset $\CQ(q,q')$ with $|\CQ(q,q')| \gg \delta^{O(1)}R/Q$, such that $(q,\widetilde{q}') \in \widetilde{E}_i$ and $\chi_{q,q'}^{(1)} = \chi_{q,\widetilde{q}'}^{(1)}$ for all $\widetilde{q}' \in \CQ(q,q')$. Thus $\alpha(q,q') = \alpha(q,\widetilde{q}')$ for all $\widetilde{q}' \in \CQ(q,q')$, and by~\eqref{eq:alpha-diophantine-1} applied to $(q,\widetilde{q}')$ we obtain
\[ \left\| \widetilde{r} r_{i+1} \alpha_i(q,q') \left(\frac{[q,\widetilde{q}']}{q}\right)^i \right\| \ll \delta^{-O(1)} (KL/R)^{-i}, \]
for each $\widetilde{q}' \in \CQ(q,q')$. Since $\widetilde{E}_i \subset E$, property (3) of the set $E$ implies that
\[ \# \{[q,\widetilde{q}']/q  \colon \widetilde{q}' \in \CQ(q,q') \} \gg \delta^{O(1)}|\CQ(q,q')| \gg \delta^{O(1)} R/Q. \]
By a standard recurrence result such as \cite[Lemma 4.5]{GT-nilsequence}, there exists $r_i \ll \delta^{-O(1)}\widetilde{r}r_{i+1} \ll \delta^{-O(1)}$ such that
\[ \|r_i \alpha_i(q,q')\| \ll \delta^{-O(1)} (KL/Q)^{-i}, \]
as desired. The analogous bound for $\alpha_i'(q,q')$ can be proved in a similar way. This finishes the proof of the claim, and also the proof of Lemma~\ref{lem:typeII}.
\end{proof}

\bibliographystyle{plain}
\bibliography{mobius-progression}

\begin{thebibliography}{10}

\bibitem{BSZ13}
J.~Bourgain, P.~Sarnak, and T.~Ziegler.
\newblock Disjointness of {M}oebius from horocycle flows.
\newblock In {\em From {F}ourier analysis and number theory to {R}adon
  transforms and geometry}, volume~28 of {\em Dev. Math.}, pages 67--83.
  Springer, New York, 2013.

\bibitem{FT96}
E.~Fouvry and G.~Tenenbaum.
\newblock R\'epartition statistique des entiers sans grand facteur premier dans
  les progressions arithm\'etiques.
\newblock {\em Proc. London Math. Soc. (3)}, 72(3):481--514, 1996.

\bibitem{FH}
N.~Franzikinakis and B.~Host.
\newblock Higher order {F}ourier analysis of multiplicative functions and
  applications.
\newblock {\em J. Amer. Math. Soc.}
\newblock to appear.

\bibitem{Gre16}
B.~Green.
\newblock A note on multiplicative functions on progressions to large moduli.
\newblock {\em Proc. Royal Society of Edinburgh, Series A}, 2016.
\newblock To appear.

\bibitem{GT-mobius-nil}
B.~Green and T.~Tao.
\newblock The {M}\"obius function is strongly orthogonal to nilsequences.
\newblock {\em Ann. of Math. (2)}, 175(2):541--566, 2012.

\bibitem{GT-nilsequence}
B.~Green and T.~Tao.
\newblock The quantitative behaviour of polynomial orbits on nilmanifolds.
\newblock {\em Ann. of Math. (2)}, 175(2):465--540, 2012.

\bibitem{GTZ}
B.~Green, T.~Tao, and T.~Ziegler.
\newblock An inverse theorem for the {G}owers {$U^{s+1}[N]$}-norm.
\newblock {\em Ann. of Math. (2)}, 176(2):1231--1372, 2012.

\bibitem{Harper}
A.~J. Harper.
\newblock A different proof of a finite version of bilinear sum inequality.
\newblock Note available at https://www.dpmms.cam.ac.uk/\textasciitilde
  ajh228/FiniteBilinearNotes.pdf.

\bibitem{Har12}
A.~J Harper.
\newblock {B}ombieri-{V}inogradov and {B}arban-{D}avenport-{H}alberstam type
  theorems for smooth numbers.
\newblock {\em arXiv preprint arXiv:1208.5992}, 2012.

\bibitem{IK04}
H.~Iwaniec and E.~Kowalski.
\newblock {\em Analytic number theory}, volume~53 of {\em American Mathematical
  Society Colloquium Publications}.
\newblock American Mathematical Society, Providence, RI, 2004.

\bibitem{Kat86}
I.~K{\'a}tai.
\newblock A remark on a theorem of {H}. {D}aboussi.
\newblock {\em Acta Math. Hungar.}, 47(1-2):223--225, 1986.

\bibitem{MR16}
K.~Matom{\"a}ki and M.~Radziwi{\l}l.
\newblock Multiplicative functions in short intervals.
\newblock {\em Ann. of Math. (2)}, 183(3):1015--1056, 2016.

\bibitem{MRT15}
K.~Matom{\"a}ki, M.~Radziwi{\l}l, and T.~Tao.
\newblock An averaged form of {C}howla's conjecture.
\newblock {\em Algebra Number Theory}, 9(9):2167--2196, 2015.

\bibitem{Mat14}
L.~Matthiesen.
\newblock Generalized {F}ourier coefficients of multiplicative functions.
\newblock {\em arXiv preprint arXiv:1405.1018}, 2014.

\bibitem{MV77}
H.~L. Montgomery and R.~C. Vaughan.
\newblock Exponential sums with multiplicative coefficients.
\newblock {\em Invent. Math.}, 43(1):69--82, 1977.

\bibitem{Ram09}
O.~Ramar{\'e}.
\newblock {\em Arithmetical aspects of the large sieve inequality}, volume~1 of
  {\em Harish-Chandra Research Institute Lecture Notes}.
\newblock Hindustan Book Agency, New Delhi, 2009.
\newblock With the collaboration of D. S. Ramana.

\bibitem{TV06}
T.~Tao and V.~Vu.
\newblock {\em Additive combinatorics}, volume 105 of {\em Cambridge Studies in
  Advanced Mathematics}.
\newblock Cambridge University Press, Cambridge, 2006.

\end{thebibliography}

\end{document}